\newif\ifdraft
\newenvironment{aenumerate}{%
	\begin{enumerate}[label=(\alph{*}), ref=(\alph{*})]
}{%
	\end{enumerate}%
}
\definecolor{labelkey}{gray}{0.5}
\newlength{\myarrowsize} 
\newenvironment{diagram*}[2]{%
\[%
\begin{tikzpicture}[>=cmto,baseline=(current bounding box.center),%
	to/.style={->,font=\scriptsize,cap=round},%
	into/.style={cmhook->,font=\scriptsize,cap=round},%
	onto/.style={-cmonto,font=\scriptsize,cap=round},%
	math/.style={matrix of math nodes, row sep=#2, column sep=#1,%
		text height=1.5ex, text depth=0.25ex}]%
}{%
\end{tikzpicture}%
\]%
\ignorespacesafterend%
}
\newcommand{\Dmod}{\mathscr{D}}
\newcommand{\Mmod}{\mathcal{M}}
\newcommand{\derL}{\mathbf{L}}
\newcommand{\cohH}{\mathcal{H}}
\newcommand{\Ltensor}{\overset{\derL}{\tensor}}
\newcommand{\tensor}{\otimes}
\newcommand{\shTor}{\mathcal{T}\hspace{-2.5pt}\mathit{or}}
\newcommand{\ZZ}{\mathbb{Z}}
\newcommand{\QQ}{\mathbb{Q}}
\newcommand{\CC}{\mathbb{C}}
\newcommand{\HH}{\mathbb{H}}
\newcommand{\PP}{\mathbb{P}}
\DeclareMathOperator{\im}{im}
\DeclareMathOperator{\coker}{coker}
\DeclareMathOperator{\Spec}{Spec}
\DeclareMathOperator{\Supp}{Supp}
\DeclareMathOperator{\gr}{gr}
\DeclareMathOperator{\DR}{DR}
\DeclareMathOperator{\Ext}{Ext}
\DeclareMathOperator{\Var}{Var}
\newcommand{\shf}[1]{\mathscr{#1}}
\newcommand{\OX}{\shf{O}_X}
\newcommand{\restr}[1]{\big\vert_{#1}}
\def\overbar#1#2#3{{%
	\setbox0=\hbox{\displaystyle{#1}}%
	\dimen0=\wd0
	\advance\dimen0 by -#2 
	\vbox {\nointerlineskip \moveright #3 \vbox{\hrule height 0.3pt width \dimen0}%
		\nointerlineskip \vskip 1.5pt \box0}%
}}
\newcommand{\into}{\hookrightarrow}
\newcommand{\onto}{\to\hspace{-0.7em}\to}
\newcommand{\shO}{\shf{O}}
\let\@@seccntformat\@seccntformat
\renewcommand*{\@seccntformat}[1]{%
  \expandafter\ifx\csname @seccntformat@#1\endcsname\relax
    \expandafter\@@seccntformat
  \else
    \expandafter
      \csname @seccntformat@#1\expandafter\endcsname
  \fi
    {#1}%
}
\newcommand*{\@seccntformat@subsection}[1]{%
  \textbf{\csname the#1\endcsname.}
}
\let\@paragraph\paragraph
\renewcommand*{\paragraph}[1]{%
	\vspace{0.3\baselineskip}%
	\@paragraph{\textit{#1}}%
}
\newtheorem*{theorem*}{Theorem}
\newtheorem{lemma}[equation]{Lemma}
\newtheorem*{lemma*}{Lemma}
\newtheorem{corollary}[equation]{Corollary}
\newtheorem{proposition}[equation]{Proposition}
\newtheorem*{proposition*}{Proposition}
\theoremstyle{definition}
\newtheorem{definition}[equation]{Definition}
\newtheorem*{definition*}{Definition}
\newtheorem{remark}[equation]{Remark}
\newtheorem{example}[equation]{Example}
\newtheorem*{example*}{Example}
\newtheorem*{problem*}{Problem}
\theoremstyle{plain}
\newcommand{\theoremref}[1]{\hyperref[#1]{Theorem~\ref*{#1}}}
\newcommand{\lemmaref}[1]{\hyperref[#1]{Lemma~\ref*{#1}}}
\newcommand{\definitionref}[1]{\hyperref[#1]{Definition~\ref*{#1}}}
\newcommand{\propositionref}[1]{\hyperref[#1]{Proposition~\ref*{#1}}}
\newcommand{\conjectureref}[1]{\hyperref[#1]{Conjecture~\ref*{#1}}}
\newcommand{\corollaryref}[1]{\hyperref[#1]{Corollary~\ref*{#1}}}
\newcommand{\exampleref}[1]{\hyperref[#1]{Example~\ref*{#1}}}
\let\old@caption\caption
\renewcommand*{\caption}[1]{%
	\setcounter{figure}{\value{equation}}%
	\stepcounter{equation}%
	\old@caption{#1}\relax%
}
\newcounter{intro}
\newtheorem{intro-conjecture}[intro]{Conjecture}
\newtheorem{intro-corollary}[intro]{Corollary}
\newtheorem{intro-theorem}[intro]{Theorem}
\newcommand{\OY}{\shO_Y}
\newcommand{\parref}[1]{\hyperref[#1]{\S\ref*{#1}}}
\newcommand*\if@single[3]{%
  \setbox0\hbox{${\mathaccent"0362{#1}}^H$}%
  \setbox2\hbox{${\mathaccent"0362{\kern0pt#1}}^H$}%
  \ifdim\ht0=\ht2 #3\else #2\fi
  }
\newcommand*\rel@kern[1]{\kern#1\dimexpr\macc@kerna}
\newcommand*\widebar[1]{\@ifnextchar^{{\wide@bar{#1}{0}}}{\wide@bar{#1}{1}}}
\newcommand*\wide@bar[2]{\if@single{#1}{\wide@bar@{#1}{#2}{1}}{\wide@bar@{#1}{#2}{2}}}
\newcommand*\wide@bar@[3]{%
  \begingroup
  \def\mathaccent##1##2{%
    \if#32 \let\macc@nucleus\first@char \fi
    \setbox\z@\hbox{$\macc@style{\macc@nucleus}_{}$}%
    \setbox\tw@\hbox{$\macc@style{\macc@nucleus}{}_{}$}%
    \dimen@\wd\tw@
    \advance\dimen@-\wd\z@
    \divide\dimen@ 3
    \@tempdima\wd\tw@
    \advance\@tempdima-\scriptspace
    \divide\@tempdima 10
    \advance\dimen@-\@tempdima
    \ifdim\dimen@>\z@ \dimen@0pt\fi
    \rel@kern{0.6}\kern-\dimen@
    \if#31
      \overline{\rel@kern{-0.6}\kern\dimen@\macc@nucleus\rel@kern{0.4}\kern\dimen@}%
      \advance\dimen@0.4\dimexpr\macc@kerna
      \let\final@kern#2%
      \ifdim\dimen@<\z@ \let\final@kern1\fi
      \if\final@kern1 \kern-\dimen@\fi
    \else
      \overline{\rel@kern{-0.6}\kern\dimen@#1}%
    \fi
  }%
  \macc@depth\@ne
  \let\math@bgroup\@empty \let\math@egroup\macc@set@skewchar
  \mathsurround\z@ \frozen@everymath{\mathgroup\macc@group\relax}%
  \macc@set@skewchar\relax
  \let\mathaccentV\macc@nested@a
  \if#31
    \macc@nested@a\relax111{#1}%
  \else
    \def\gobble@till@marker##1\endmarker{}%
    \futurelet\first@char\gobble@till@marker#1\endmarker
    \ifcat\noexpand\first@char A\else
      \def\first@char{}%
    \fi
    \macc@nested@a\relax111{\first@char}%
  \fi
  \endgroup
}
\DeclareMathOperator{\adj}{adj}
\DeclareMathOperator{\Gr}{Gr}
\newcommand{\remarkref}[1]{\hyperref[#1]{Remark~\ref*{#1}}}
\begin{document}

\vspace{\baselineskip}

\title{Weighted Hodge ideals of reduced divisors}

\author[Sebasti\'an~Olano]{Sebasti\'an~Olano}
\address{Department of Mathematics, University of Toronto, 40 St. George St., Toronto, Ontario, Canada , M5S 2E4} \email{{\tt seolano@math.toronto.edu}}

\thanks{}


\begin{abstract} We study the Hodge and weight filtrations on the localization along a hypersurface, using methods from birational geometry and the $V$-filtration induced by a local defining equation. These filtrations give rise to ideal sheaves called weighted Hodge ideals, which include the adjoint ideal and a multiplier ideal. We analyze their local and global properties, from which we deduce applications related to singularities of hypersurfaces of smooth varieties.
\end{abstract}

\maketitle

\section{Introduction}

In this paper, we continue the study of weighted Hodge ideals that started in \cite{olano22a}, where the focus was the $0$-th weighted Hodge ideals, also called weighted multiplier ideals. We show that several results satisfied by the weighted multiplier ideals can be generalized under suitable conditions.\\

Let $X$ be a smooth complex variety of dimension $n$. To an effective reduced divisor $D$ on $X$ one can associate a sequence of ideal sheaves $I_p(D)\subseteq \shO_X$, called the Hodge ideals of $D$ and studied in a series of papers \cite{hodgeideals},\cite{mustatapopa18},\cite{mustatapopa19}, \cite{mustatapopa20b}, \cite{mustatapopa20}. They arise from the theory of mixed Hodge modules of M. Saito, which induces a Hodge filtration $F_{\bullet}\OX(*D)$ by coherent $\OX$-modules on $\OX(*D)$, the sheaf of functions with poles along $D$, seen as a left $\Dmod_X$-module. This $\Dmod$-module underlies the mixed Hodge module $j_*\QQ^H_{U}[n]$, where $j: U=X\setminus D\into X$. Saito showed that the Hodge filtration is contained in the pole order filtration, that is,  $$F_p\OX(*D) \subseteq \OX((p+1)D)$$ for all $p\geq 0$. Consequently, we can define the Hodge ideal $I_p(D)$ by $$F_p\OX(*D) = \OX((p+1)D) \otimes I_p(D).$$ The $\Dmod_X$-module $\OX(*D)$ is also endowed with a weight filtration $W_{\bullet}\OX(*D)$ by $\Dmod_X$-submodules. The Hodge filtration of these submodules satisfies $$F_pW_{n+l}\OX(*D) \subseteq F_p\OX(*D) \subseteq \OX((p+1)D),$$ and similarly we can define the weighted Hodge ideals by $$F_pW_{n+l}\OX(*D) = \OX((p+1)D) \otimes I^{W_l}_p(D).$$

The weighted Hodge ideals form a chain of inclusions $$I_p^{W_0}(D) \subseteq I_p^{W_1}(D) \subseteq \cdots \subseteq I_p^{W_n}(D).$$ We can always understand the two extreme ideals in this chain. The first element in the list admits an easy description: $$I_p^{W_0}(D) = \OX(-(p+1)D).$$ On the other end, the last ideal in this chain is the usual $p$-th Hodge ideal, that is, $$I_p(D) = I_p^{W_n}(D).$$ Unlike $I_p^{W_0}(D)$, for all the other degrees, the support of the scheme defined by $I_p^{W_l}(D)$ is contained in the singular locus of $D$.\\

\noindent\textbf{Birational definition} We give an alternative description of the weighted Hodge ideals in terms of a resolution of singularities. Let $f:Y\to X$ be a resolution of singularities of the pair $(X,D)$ which is an isomorphism over $X\setminus D$, and let $E := (f^*D)_{\rm red}$. This description stems from the birational definition of Hodge ideals in \cite{hodgeideals}*{\textsection{9}}, and uses right $\Dmod$-modules. The $\Dmod_Y$-module $\omega_Y(*E)$ admits a filtered resolution by $\Dmod_Y$-modules given by $$B^{\bullet} = 0\to \Dmod_Y\to \Omega^1_Y(\log{E})\otimes_{\shO_Y} \Dmod_Y\to \cdots\to \omega_Y(E)\otimes_{\shO_Y}\Dmod_Y\to 0.$$ Similarly, using the weight filtration on the sheaves of logarithmic $p$-forms (see (\ref{weightlogarithmic})), we show that the complex $$W_lB^{\bullet}=0\to \Dmod_Y\to W_l\Omega^1_Y(\log{E})\otimes_{\shO_Y} \Dmod_Y\to \cdots\to W_l\omega_Y(E)\otimes_{\shO_Y}\Dmod_Y\to 0$$ is filtered quasi-isomorphic to the $\Dmod_X$-module $W_{n+l}\omega(*E)$ (see \propositionref{resolutionmodule}).\\

The $\Dmod_X$-module $\omega_X(*D)$ can be described using the filtered resolution of $\omega_Y(*E)$ described above. More precisely, we can define the complex $A^{\bullet}$ by $$0\to f^*\Dmod_X\to \Omega^1_Y(\log{E})\otimes_{\shO_Y} f^*\Dmod_X\to \cdots\to \omega_Y(E)\otimes_{\shO_Y}f^*\Dmod_X\to 0$$ placed in degrees $-n, \ldots, 0$, and we have that, $$R^0f_*A^{\bullet} \cong \omega_X(*D)$$ (see \cite{hodgeideals}*{\textsection{9}}). To give the alternative description of the weighted Hodge ideals, we introduce the complex $C^{\bullet}_{l,p-n}$ defined as $$ 0\to f^*F_{p-n}\Dmod_X\to W_l\Omega^1_Y(\log{E})\otimes_{\shO_Y} f^*F_{p-n+1}\Dmod_X\to \cdots\to W_l\omega_Y(E)\otimes_{\shO_Y}f^*F_p\Dmod_X\to 0$$ and we show that the image of $$R^0f_*C^{\bullet}_{l,p-n} \to R^0f_*A^{\bullet}= \omega_X(*D)$$ is precisely $F_{p-n}W_{n+l}\omega_X(*D) = I^{W_l}_p(D) \otimes \omega_X((p+1)D)$ (see \propositionref{birationaldef}).\\

\noindent\textbf{Description of weighted Hodge ideals using the $V$-filtration.} A very convenient local description of Hodge ideals was given in terms of the Kashiwara-Malgrange $V$-filtration of the graph embedding $i_+\OX$ in \cite{mustatapopa20b}*{Theorem A'} (see (\ref{hivfiltration})), which works in the more general setting of Hodge ideals of $\QQ$-divisors. In this case, we suppose that the reduced divisor $D\subseteq X$ can be defined by a regular function $f\in\OX(X)$. Weighted Hodge ideals admit a similar description.

\begin{intro-theorem}\label{whivfiltration2} Let $X$ be a smooth complex variety and $D$ a reduced divisor defined by a regular function $f\in \OX(X)$. Then, $$I^{W_l}_p(D) = \left\{ \sum_{j=0}^p Q_j(1)f^{p-j}v_j \ : v =\sum_{j=0}^p v_j\partial_t^j\delta\in V^{1}i_+\OX \text{ and } (t\partial_t)^lv \in V^{>1}i_+\OX\right\}.$$ 
\end{intro-theorem}

The proof is based on two ideas. First, we can relate the Hodge filtration of $V^1i_+\OX$ with that of $\OX(*D)$ (see \ref{hodgevfiltration}). Second, the weight filtration on the nearby cycles sheaf can be related to that of the local cohomology sheaf (\propositionref{grvtau}). This is enough to understand all the weighted Hodge ideals in the case when $D$ only has isolated weighted-homogeneous singularities (see \remarkref{qhomog}).\\

The description in \theoremref{whivfiltration2} is useful to relate the weighted Hodge ideals with some invariants of the singularities, like the minimal exponent. Recall that to the variety $D\subseteq X$ we can associate the Bernstein-Sato polynomial $b_D(s)$. The polynomial $(s+1)$ divides $b_D(s)$, and we denote $\widetilde{b_D}(s) = b_D(s)/(s+1)$. The negative of the largest root of $\widetilde{b_D}(s)$ is called the \textit{minimal exponent} of a $D$ and is denoted $\widetilde{\alpha_D}$. This invariant encodes important properties of the singularities of $D$. For instance, it is a refined version of the log-canonical threshold, since $lct(X,D) = \min\{\widetilde{\alpha_D}, 1\}$. In particular, this implies that $(X,D)$ is log-canonical if and only if $\widetilde{\alpha_D} \geq 1$. Moreover, it is a result of Saito that $D$ has rational singularities if and only if $\widetilde{\alpha_D}>1$.\\

The notions of log-canonicity and rationality can be described in terms of weighted Hodge ideals. Recall that 0-th weighted Hodge ideals, or weighted multiplier ideals, form a sequence of ideals interpolating between the adjoint ideal and a multiplier ideal. This is the case, as $I_0^{W_1}(D) = \adj(D)$ (see for instance \cite{olano22a}*{Theorem A}) and $I_0(D) = \mathcal{J}((1-\varepsilon)D)$  for $0<\epsilon\ll 1$ \cite{budursaito05}. These two ideals identify if a singularity is respectively rational or log-canonical. We give an analogous description for the higher weighted Hodge ideals. The Hodge ideal $I_p(D)$ is trivial if and only if $\widetilde{\alpha_D}\geq p+1$, in which case we say that $(X,D)$ is $p$-log-canonical. Also, the weighted Hodge ideal $I_p^{W_1}(D)$ is trivial if and only if $\widetilde{\alpha_D}> p+1$ (see \corollaryref{whiminimalexp}), which some authors referred to as $D$ being $p$-rational. The rest of the $p$-weighted Hodge ideals filter and measure the ``distance'' between $(X,D)$ having $p$-log-canonical singularities and $D$ being $p$-rational.\\

\noindent\textbf{Isolated singularities.} Recall that the weighted Hodge ideals satisfy $$I_p^{W_{l-1}}(D)\subseteq I_p^{W_l}(D).$$ The difference between the two ideals can be described by the coherent sheaf $F_p\gr^W_{n+l}\OX(*D)$ (see (\ref{differencesequence})). If $D$ has isolated singularities, we give a description of the dimension of this sheaf at the singular points in terms of a resolution of singularities. For this, possibly after restricting to an open set, assume $D$ has one isolated singularity $x\in D$. In this case, there exists a pure Hodge structure $H_l$ for $l\geq 2$, such that the dimension of their Hodge pieces describes the desired dimension. More concretely,  \begin{equation}\label{equationdifference} \dim(F_p(\gr^W_{n+l}\OX(*D))_x) = \sum_{r=0}^p\binom{n+p-r}{p-r}\dim(\Gr_F^{n-r}H_l)\end{equation} (see \textsection \ref{sectiondifference} for more details). For this reason, to find the difference between two consecutive weighted Hodge ideals, it is enough to compute the dimensions of the spaces $\Gr_F^{n-p}H_l$. 

\begin{intro-theorem}\label{thmdifference} Let $g:\widetilde{D}\to D$ be a log-resolution of singularities that is an isomorphism outside of $x$. Let $G\subseteq\widetilde{D}$ be the exceptional divisor.  Then $$\dim(\Gr_F^{n-p}H_l) = h^{p,n-l-p}(H^{n-2}(G))$$ if $l\geq 3$, and $$\dim(\Gr_F^{n-p}H_2) = h^{p,n-p-2}(H^{n-2}(G)) - h^{n-p-1,p+1}(H^n(G)),$$ where $H^k(G) = H^k(G,\CC)$ and $h^{p,q}(H^k(G)) = \dim(H^{p,q}(\Gr^W_{p+q}H^k(G)))$.
\end{intro-theorem}

When $p =0$ the second summand in the description of $\dim(\Gr_F^{n-p}H_2)$ is 0 because the dimension of $G$ is $n-2$, and therefore these dimensions are described as Hodge numbers of the middle cohomology of $G$. For $p\geq 1$ we cannot expect this term to be 0 in general, but this dimension admits a geometric interpretation (see \remarkref{rmksecondterm}).\\

\noindent\textbf{Vanishing results.} Weighted Hodge ideals satisfy global results under suitable conditions. Let $X$ be a smooth projective variety and $D$ an ample divisor with at most isolated singularities. Under this assumptions, when $p=0$ we have that $$H^i(X, \omega_X(D)\otimes I_0^{W_l}(D)) = 0$$ for $i\geq 1$ and $l\geq 2$ \cite{olano22a}*{Theorem E}. To generalize this result for all $p\geq 1$, we require the condition that $I_{p-1}^{W_l}(D) = \OX$. 

\begin{intro-theorem}\label{vanishingample} Let $X$ be a smooth projective variety of dimension $n$, and $D$ an ample reduced effective divisor with at most isolated singularities. Suppose that $I_{p-1}^{W_1}(D)$ is trivial. Then
\begin{enumerate}\item For $l\geq 2$ and $i\geq 2$,  $$H^i(X, \omega_X((p+1)D) \otimes I_p^{W_l}(D))= 0.$$ 
\item If $H^j(X, \Omega_X^{n-j}((p-j+1)D)) = 0$ for all $1\leq j\leq p$, then $$H^1(X, \omega_X((p+1)D) \otimes I_p^{W_l}(D))= 0$$ for $l\geq 2$.
\end{enumerate}
\end{intro-theorem}

When $l=1$ and $i=1$ the vanishing does not hold in general. For an example see \remarkref{rmkvanishingample}. A Kodaira-type vanishing result is also satisfied for all $l\geq 1$, and the proof is based on a vanishing result by Saito \cite{saito90}*{Proposition 2.33} (see \propositionref{vanishingkodaira}).\\

\noindent\textbf{Applications.} The global and local results we have discussed can be used to obtain results about the geometry of certain isolated singularities of hypersurfaces in $\PP^n$. This is because the vanishing condition in \theoremref{vanishingample} is satisfied when $X$ is a toric variety.

\begin{intro-corollary}\label{indconditions} Let $D\subseteq \PP^n$ be a hypersurface of degree $d$ with at most isolated singularities. Let $Z_{l,p}$ be the scheme defined by $I_p^{W_l}(D)$. Then, $$H^0(\PP^n, \shO_{\PP^n}(k)) \onto H^0(\PP^n, \shO_{Z_{l,p}})$$ for $k\geq (p+1)d-n-1$ if $l\geq 2$, and $k\geq (p+1)d - n$ if $l\geq 1$.
\end{intro-corollary}

This result gives a bound on a certain type of isolated singularities we describe next. For simplicity, suppose $D$ has at most one isolated singularity $x\in D$, and assume $\widetilde{\alpha_D} = p+1$. We describe first the case $p=0$. This case corresponds to a log-canonical and not rational singularity. In this case, according to (\ref{equationdifference}), the length of the scheme described by $I_0^{W_1}(D)$ is determined by $\Gr_F^0(H^{n-2}(G))$, using the notation of \theoremref{thmdifference}. Ishii proved that in this case, $\dim(\Gr_F^0(H^{n-2}(G))) = 1$ \cite{ishii85}*{Proposition 3.7}. This means that the ideal $I_0^{W_1}(D)$ is the maximal ideal of $x$ in $X$, and that there exists exactly one degree $l\geq 2$ such that $$\dim(\Gr_F^n(H_l)) = 1,$$ while the dimension for the other degrees is 0. A log-canonical singularity is of type $(0,n-l)$ in this case \cite{ishii85}*{Definition 4.1}.\\

Assume now that $\widetilde{\alpha_D} = p+1$ for an $p\in\ZZ_{\geq 0}$, and that $D$ has at most one isolated singularity $x\in D$. In this case, we have an analogous picture. Namely, the ideal $I_p^{W_1}(D)$ is the maximal ideal of $x$ in $X$ (see \propositionref{minexprk1}), or equivalently, as $I_p(D) = \OX$, the length of the scheme described by $I_p^{W_1}(D)$ is 1. This in particular means that $$\Gr_F^{n-r}H_l = 0$$ for $l \geq 2$ and $0\leq r \leq p-1$ by (\ref{equationdifference}) and \theoremref{thmdifference}. Moreover, by the same results, we know that there exists exactly one degree $l\geq 2$ such that $$\dim(\Gr_F^{n-p}H_l) = 1,$$ while the dimension for all the other degrees is 0. Related invariants in similar conditions have been studied by Friedman and Laza in \cite{friedmanlaza22b}*{Theorem 6.11 and Corollary 6.14}.\\

In analogy to the case of log-canonical singularities, we call the singularity described above of type $(p,n-l-p)$ (see \definitionref{(p,s-p)}). Weighted homogeneous singularities with $\widetilde{\alpha_f} = p+1$ are examples of singularities of type $(p,n-2-p)$ and the origin in $Z(x^2+y^2+z^2+u^2w^2+u^4+w^5)\subseteq \mathbb{A}^5$ gives an example of a singularity of type $(1,5-3-1) = (1,1)$ (see \exampleref{examplessings}). For a hypersurface of $\PP^n$ with at most isolated singularities and $\widetilde{\alpha_D} = p+1$, we give a bound on the number of these singularities (see \corollaryref{countpoints}). \\

\noindent\textbf{Restriction theorem.} Finally, we study the behavior of weighted Hodge ideals of a pair $(X,D)$ under the restriction of a hypersurface of $X$. Let $H\subseteq X$ be a smooth hypersurface, and $D_H$ the restriction of $D$ to $H$. If $D_H$ is reduced, then we can also consider the pair $(H, D_H)$ and their respective weighted Hodge ideals.

\begin{intro-theorem}\label{thmrestriction} Let $X$ be a smooth variety and $D$ an effective reduced divisor. Let $H\subseteq X$ be a smooth divisor such that $H\subsetneq \Supp(D)$ and $D_H = D\restr{H}$ is reduced. Then, for every $p\geq 0$ and $l\geq 0$ we have $$I_p^{W_l}(D_H)\subseteq I_p^{W_l}(D)\cdot\shO_H.$$ Moreover, if $H$ is general, then we have an equality. \end{intro-theorem}

This is the analogue of the Restriction Theorem for Hodge ideals \cite{mustatapopa18}*{Theorem A}, and for multiplier ideals \cite{lazarsfeld2}*{Theorem 9.5.1}. 

\medskip

\noindent
{\bf Acknowledgements.}
I would like to thank Mircea Musta\c{t}\u{a} and Mihnea Popa for their constant support and many conversations during the project. I am also very grateful to the anonymous referees for their feedback on improving the presentation of the article and for suggesting a simpler proof of \lemmaref{lemmaautomorphism}, explained in \remarkref{remarkreferee}.

\section{Preliminaries} 

\subsection{Mixed Hodge modules}\label{mhmpreliminaries} In this section, we recall some facts about mixed Hodge modules and set up the notation we use throughout this paper.\\

Let $X$ be a smooth variety of dimension $n$. Mixed Hodge modules introduced by Saito in \cite{saito88} are the main object used throughout this article. For a graded-polarizable mixed Hodge module $M$, we denote the underlying left regular holonomic $\Dmod_X$-module by $\Mmod$. In some contexts, it is more useful to use right $\Dmod_X$-modules. Recall that if $\Mmod$ is a left $\Dmod_X$-module, the corresponding right $\Dmod_X$-module is $\Mmod\otimes_{\OX}\omega_X$, where $\omega_X$ is the canonical sheaf. We mostly use left $\Dmod$-modules, and in case we are using right $\Dmod$-modules instead, we will say it explicitly.\\

A mixed Hodge module $M$ is endowed with a weight filtration, which we denote by $W_{\bullet}M$, and $$\gr^W_lM := W_lM/W_{l-1}M$$ is the quotient, which is a polarizable Hodge module of weight $l$. We denote by $F_{\bullet}\Mmod$ the Hodge filtration. The de Rham complex is defined as: $$\DR(\Mmod) = \big[\Mmod\to\Omega^1_X\otimes_{\shO_X}\Mmod\to\cdots\to\omega_X\otimes_{\shO_X}\Mmod\big][n],$$ and the Hodge filtration of $\Mmod$ induces a filtration on this complex: $$F_p\DR(\Mmod) = \big[F_p\Mmod\to\Omega^1_X\otimes_{\shO_X}F_{p+1}\Mmod\to\cdots\to\omega_X\otimes_{\shO_X}F_{p+n}\Mmod\big][n].$$ The $p$-th subquotient of this filtration is the complex $$\gr^F_p\DR(\Mmod) = \big[\gr^F_p\Mmod\to\Omega^1_X\otimes_{\shO_X}\gr^F_{p+1}\Mmod\to\cdots\to\omega_X\otimes_{\shO_X}\gr^F_{p+n}\Mmod\big][n].$$ 

Let $D$ be a reduced effective divisor. The mixed Hodge module we mostly study in this paper is $j_*\QQ^H_{U}[n]$, where $j:U=X\smallsetminus D\into X$, whose underlying $\Dmod_X$-module is the sheaf of functions with poles along $D$ denoted by $\shO_X(*D)$. To study $\shO_X(*D)$, it is sometimes convenient to use a resolution of singularities, and the properties of pushforwards. Fix a log-resolution of singularities of $(X,D)$, that is, a proper birational morphism $f:Y\to X$ such that $Y$ is smooth, it is an isomorphism over $U$, and $(f^*D)_{red} = E$ is a divisor with simple normal crossings. In this setup, we have that \begin{equation}\label{pushforward}f_+\OY(*E) \cong H^0f_+\OY(*E) \cong \OX(*D)\end{equation} (see for example \cite{hodgeideals}*{Lemma 2.2}). Since $E$ is a simple normal crossings divisor, the weight filtration of the $\Dmod_Y$-module $\OY(*E)$ can be described in terms of the intersections of its irreducible components. The lowest degree of the weight filtration is $n= \dim{Y}$, that is: $$W_{n-1}\OY(*E) = 0.$$ The lowest piece corresponds to the canonical Hodge module of $Y$: $$W_n\OY(*E) \cong \OY.$$ To describe the rest of the subquotients, we introduce the following very useful notation. Let $$E = \bigcup_{i\in I}E_i.$$ The variety $$E(l) = \bigsqcup_{\substack {J\subseteq I\\|J|=l}}E_J,$$ with $E_J = \displaystyle\bigcap_{j\in J} E_j$, is a smooth and possibly disconnected variety. We denote $i_l: E(l) \to Y$ the map such that on each component is the inclusion. We have that \begin{equation}\label{isoks}\gr^W_{n+l}\OY(*E) \cong i_{l+}\shO_{E(l)}\end{equation} with a Tate twist (see \cite{ks21}*{Prop 9.2}). \\

In order to describe the weight filtration of a pushforward of a projective morphism, a useful tool is to use the spectral sequence associated to the weight filtration: \begin{equation}\label{ss1}E_1^{p,q}= H^{p+q}f_+(\gr^W_{-p}\OY(*E))\Rightarrow H^{p+q}f_+\OY(*E),\end{equation}  
which degenerates at $E_2$, and there is an isomorphism: $$E_2^{p,q}\cong \gr^W_{q}H^{p+q}f_+\OY(*E)$$ \cite{saito90}*{Proposition 2.15}.\\

Finally, recall that the sheaf of  $p$-forms with logarithmic poles along $E$ denoted by $\Omega^p_Y(\log{E})$ are endowed with a weight filtration. This increasing filtration consists of subsheaves \begin{equation}\label{weightlogarithmic} W_l\Omega_Y^p(\log{E}) \subseteq \Omega_Y^p(\log{E})\end{equation} such that if $z_1, \ldots, z_n$ are local coordinates on an open set $V$, and $E$ is given by the equation $$z_1\cdots z_r=0,$$ then in $V$, $W_l\Omega^p(\log{E})$ is a $\shf{O}_V$ module generated by elements of the form $$\frac{dz_{i_1}}{z_{i_1}}\wedge \cdots \wedge \frac{dz_{i_s}}{z_{i_s}}\wedge dz_{j_1} \wedge \cdots \wedge dz_{j_{p-s}}$$ with $i_l\leq r$ and $s\leq k$ (see \cite{elzeinetal}*{3.4.1.2} for more details). For $I=\{i_1,\ldots , i_s\}$ and $J=\{j_1,\ldots, j_{p-s}\}$ we use the notation $$\frac{dz_I}{z_I}\wedge dz_J = \frac{dz_{i_1}}{z_{i_1}}\wedge \cdots \wedge \frac{dz_{i_s}}{z_{i_s}}\wedge dz_{j_1} \wedge \cdots \wedge dz_{j_{p-s}}.$$

\section{Characterizations}
\subsection{Definition} In this section, we introduce weighted Hodge ideals using the theory of mixed Hodge modules. \\

A fundamental result by Saito about the Hodge filtration on $\OX(*D)$ states that  $$F_p\OX(*D) \subseteq \OX((p+1)D)$$ (see \cite{saito93}*{Proposition 0.9}). The definition of Hodge ideals follows from this result. These ideals are denoted by $I_p(D)$, and are defined using the formula $$F_p\OX(*D) = I_p(D) \otimes\OX((p+1)D)$$ (see \cite{hodgeideals}*{Definition 9.4}). In this article, we study weighted Hodge ideals which are defined similarly using the weight filtration with which $\OX(*D)$ is endowed. The Hodge filtration of the sub-$\Dmod_X$ modules $W_{n+l}\OX(*D)$ satisfies $$F_pW_{n+l}\OX(*D) \subseteq F_p\OX(*D) \subseteq \OX((p+1)D)$$ for all $p\geq 0$.

\begin{definition}[Weighted Hodge ideals] Let $X$ be a smooth complex variety and $D$ a reduced divisor. For $l\geq 0$ and $p\geq 0$, we define the ideal sheaf $I_p^{W_l}(D)$ on $X$ by the formula $$F_pW_{n+l}\OX(*D) = I_p^{W_l}(D)\otimes\OX((p+1)D).$$ We call $I_p^{W_l}(D)$ the \textit{$l$-th weighted $p$-th Hodge ideal} of $D$. \end{definition} 

There is in fact a chain of inclusions \begin{equation}\label{chain} I_p^{W_{1}}(D)\subseteq I_p^{W_{2}}(D)\subseteq \cdots\subseteq I_p^{W_{n-1}}(D)\subseteq I_p^{W_{n}}(D)\end{equation} for all $p\geq 0$. Indeed, the weight filtration of $\OX(*D)$ is an increasing filtration, hence $$F_pW_{n+l}\OX(*D) \subseteq F_pW_{n+l+1}\OX(*D),$$ or equivalently $$\OX((p+1)D)\otimes I_p^{W_l}(D) \subseteq \OX((p+1)D)\otimes I_p^{W_{l+1}}(D).$$

\subsection{Simple normal crossings divisor}

Weighted Hodge ideals can be described completely when the reduced divisor $D$ has simple normal crossings. In this case, the Hodge filtration of $\OX(*D)$ is fully understood, and from this information we can deduce the Hodge filtration of $W_{n+l}\OX(*D)$.\\

Let $D$ be a simple normal crossings divisor. In this case, the Hodge filtration of $\OX(*D)$ admits a simple description: \begin{equation}\label{sncgen0} F_p\OX(*D) = F_p\Dmod_X\cdot\OX(D)\end{equation} if $p\geq 0$, and 0 otherwise. Using this, one obtains a local description of the Hodge ideals. Let $x_1, \ldots , x_n$ be coordinates around $z\in X$, such that $D$ is defined by $(x_1\cdots x_r=0)$. For every $p\geq 0$, the ideal $I_p(D)$ is generated around $z$ by \begin{equation}\label{hisnc}\{x_1^{a_1}\cdots x_r^{a_r} : \ 0\leq a_i\leq p, \ \sum{a_i} = p(r-1)\}\end{equation} \cite{hodgeideals}*{Proposition 8.2}. Weighted Hodge ideals of $D$ admit a similar local description.

\begin{proposition}\label{propositionsnc} Let $x_1, \ldots , x_n$ be coordinates around $z\in X$, such that $D$ is defined by $(x_1\cdots x_r=0)$. Then, for every $p\geq 0$ and $l\leq r$, $I_p^{W_l}(D)$ is generated around $z$ by $$\{ x_{j_1}^{a_1}\cdots x_{j_l}^{a_l}x_{I\setminus J}^{p+1} : \ J=\{j_1, \ldots , j_l\}\subseteq I, \ 0\leq a_i\leq p \text{ and } \sum{a_i} = p(l-1) \},$$ where $I = \{1,\ldots , r\}$. For $l\geq r$, $I_p^{W_l}(D) = I_p(D)$ around $z$. \end{proposition}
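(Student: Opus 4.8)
The strategy is to reduce the computation of $F_p W_{n+l}\OX(*D)$ to an explicit calculation on the filtered $\Dmod_X$-module $\OX(*D)$ using the simple normal crossings structure, mirroring the proof of \cite{hodgeideals}*{Proposition 8.2} but keeping track of the weight filtration. First, I would recall from (\ref{isoks}) that the graded pieces of the weight filtration on $\OY(*E)$ — and here $X=Y$, $D=E$ since $D$ is already SNC — are $\gr^W_{n+l}\OX(*D)\cong i_{l+}\shO_{D(l)}$ (with a Tate twist), where $D(l)$ is the disjoint union of the $l$-fold intersections $D_J$. Locally at $z$, with $D=(x_1\cdots x_r=0)$, the submodule $W_{n+l}\OX(*D)$ is spanned over $\Dmod_X$ by those "partial fraction" generators $\frac{1}{x_{j_1}\cdots x_{j_s}}$ with $s\leq l$: this is because $\OX(*D)$ decomposes (as an $\shO_X$-module, and compatibly with the $\Dmod$-action) according to which subset of the $x_i$ appear in the denominator, and the summand indexed by $J$ contributes to $\gr^W_{n+|J|}$. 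This identification of $W_{n+l}\OX(*D)$ locally is the key structural input and should follow by combining (\ref{sncgen0}), (\ref{isoks}), and the strictness of the Hodge filtration with respect to $W$.

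Second, given this local description of $W_{n+l}\OX(*D)$, I would compute its Hodge filtration. By strictness, $F_p W_{n+l}\OX(*D) = F_p\OX(*D)\cap W_{n+l}\OX(*D)$, and using (\ref{sncgen0}) we have $F_p\OX(*D) = F_p\Dmod_X\cdot\OX(D)$. Writing out $F_p\Dmod_X\cdot\frac{1}{x_1\cdots x_r}$ in coordinates: applying a differential operator of order $\leq p$ to $\frac{1}{x_1\cdots x_r}$ and multiplying by elements of $\OX$, one gets $\OX$-linear combinations of $\frac{x_{I\setminus J}^{?}}{x_1^{b_1}\cdots x_r^{b_r}}$ with $1\le b_i\le p+1$; the part landing in the summand with denominator supported on $J\subseteq I$ of size $\le l$ corresponds exactly, after clearing the denominator $x_1\cdots x_r$ to the power $p+1$ (i.e. tensoring with $\OX((p+1)D)$), to monomials $x_{j_1}^{a_1}\cdots x_{j_l}^{a_l}x_{I\setminus J}^{p+1}$ with $0\le a_i\le p$. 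The constraint $\sum a_i = p(l-1)$ comes from the order bound: differentiating $\frac{1}{x_{j_1}\cdots x_{j_l}}$ exactly $p$ times in a way that keeps all denominators at $x_{j}^{p+1}$ forces the total "unused" differentiation, measured by $\sum(p - a_i) = pl - \sum a_i$, to equal $p$, hence $\sum a_i = p(l-1)$ — this is precisely the counting already present in (\ref{hisnc}) with $r$ replaced by $l$. The case $l\ge r$ is immediate since then $W_{n+l}\OX(*D)=\OX(*D)$, so $I_p^{W_l}(D)=I_p(D)$.

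The main obstacle I anticipate is making the local splitting of $W_{n+l}\OX(*D)$ by "denominator support" fully rigorous and checking it is compatible with the $\Dmod_X$-action and the Hodge filtration — i.e., that the partial-fraction decomposition of $\OX(*D)$ realizes the weight filtration on the nose locally, not merely on associated graded. One clean way is to induct on $r$: write $\OX(*D) = \OX(*D')[\tfrac1{x_r}]$ where $D'=(x_1\cdots x_{r-1}=0)$, use the standard short exact sequence relating $\OX(*D)$, $\OX(*D')$ and the pushforward from $D_r$, and identify the weight filtration via this sequence together with Saito's description of $W_\bullet$ for a smooth divisor; then reduce to the already-known one-variable/SNC computations. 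Alternatively one can cite the combinatorial description of $W_\bullet\OX(*D)$ for SNC $D$ directly from the literature (e.g. via (\ref{isoks}) applied iteratively). Once that local model is in hand, the rest is the bookkeeping of which monomials survive, which is essentially the same argument as for $I_p(D)$ restricted to each intersection stratum $D_J$.
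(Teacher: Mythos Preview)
Your proposal is correct and arrives at the same monomial computation as the paper, but the paper streamlines the argument in a way that sidesteps precisely the ``main obstacle'' you flag. Instead of computing $F_pW_{n+l}\OX(*D)$ as the intersection $F_p\OX(*D)\cap W_{n+l}\OX(*D)$ via strictness (which requires identifying $W_{n+l}\OX(*D)$ as an $\OX$-module through the partial-fraction splitting), the paper first establishes the generation formula $F_pW_{n+l}\OX(*D)=F_p\Dmod_X\cdot F_0W_{n+l}\OX(*D)$ --- deduced from the fact that each graded piece $\gr^W_{n+l}\OX(*D)\cong i_{l+}\shO_{D(l)}$ has Hodge filtration generated in lowest degree --- and then simply cites \cite{olano22a}*{Proposition 4.1} for the identification $F_0W_{n+l}\omega_X(*D)=W_l\omega_X(D)$ (working with right $\Dmod$-modules). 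From there the computation is exactly your last paragraph: apply $F_p\Dmod_X$ to the generators $\omega/x_J$ with $|J|=l$ and read off the exponents. Your route would work too, and indeed proving that $\sum_{|J|\le l}\OX[x_J^{-1}]$ realizes $W_{n+l}$ is a useful exercise; but the generation-from-$F_0$ formula is the cleaner organizing principle here, and it makes the inductive/splitting argument you sketch unnecessary.
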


\begin{proof} The Hodge filtration of $W_{n+l}\OX(*D)$ also admits a simple description: \begin{equation}\label{wsncgen0} F_pW_{n+l}\OX(*D) = F_p\Dmod_X\cdot F_0W_{n+l}\OX(*D).\end{equation} Indeed, this follows from the fact that $\gr^W_{n+l}\OX(*D) \cong i_{l+}\shO_{E(l)}$ with a Tate Twist, so that the analogous statement of $(\ref{wsncgen0})$ is true for $i_{l+}\shO_{E(l)}$ (see e.g. \cite{saito09}*{Remark 1.1 iii}.)\\

For the rest of the proof we use right $\Dmod$-modules. By \cite{olano22a}*{Proposition 4.1}, $$F_0W_{n+l}\omega_X(*D) = W_l\omega_X(D).$$ Around $z$, $W_l\omega_X(D)$ is generated by $$\left\{\frac{\omega}{x_J}\right\}_{J\subseteq I, \ |J|=l}$$ where $\omega$ is the standard generator of $\omega_X$. It is clear that $W_l\omega_X(D)\cdot F_p\Dmod_X$ is generated by $$\left\{\frac{\omega}{x_{j_1}^{1+b_1}\cdots x_{j_l}^{1+b_l}} : \ \sum{b_i} = p, \ J\subseteq I, \text{ and }\ |J|=l\right\}.$$ The result follows from the equation $\frac{\omega}{x_{j_1}^{1+b_1}\cdots x_{j_l}^{1+b_l}} = \frac{\omega}{x_I^{p+1}} (x_{j_1}^{p-b_1}\cdots x_{j_l}^{p-b_l}x_{I\setminus J}^{p+1})$. The last statement follows from the fact that, if $l>r$, around $z$, $W_l\omega_X(D) = \omega_X(D)$.
\end{proof}

\subsection{Birational definition} Let $X$ be a smooth variety and $D$ a reduced divisor. Consider a log-resolution $f:Y\to X$ of the pair $(X,D)$, which is an isomorphism over $X\setminus D$, and denote $E=(f^*D)_{\rm red}$. A birational definition is given for Hodge ideals in \cite{hodgeideals}*{\textsection{9}}. In this section, we give a similar equivalent definition for weighted Hodge ideals. For the rest of this section we use right $\Dmod$-modules, as it is more convenient for the construction. Recall that the right $\Dmod_X$-module corresponding to $\OX(*D)$ is $\omega_X(*D)$, and $$F_{p-n}\omega_X(*D) = I_p(D) \otimes\omega_X((p+1)D).$$

Consider the following complex which we denote by $A^{\bullet}$: $$0\to f^*\Dmod_X\to \Omega^1_Y(\log{E})\otimes_{\shO_Y} f^*\Dmod_X\to \cdots\to \omega_Y(E)\otimes_{\shO_Y}f^*\Dmod_X\to 0$$ placed in degrees $-n, \ldots, 0$. The results in \cite{hodgeideals}*{\textsection{3}} say that the complex $A^{\bullet}$ represents the object $\omega_Y(*E)\Ltensor_{\Dmod_Y}\Dmod_{Y\to X}$ in the derived category of filtered right $f^{-1}\Dmod_X$-modules. Moreover, $R^0f_*A^{\bullet} \cong \omega_X(*D)$. \\

For $p\geq 0$ define the subcomplex $C^{\bullet}_{p-n}= F_{p-n}A^{\bullet}$ of $A^{\bullet}$ by $$0\to f^*F_{p-n}\Dmod_X\to \Omega^1_Y(\log{E})\otimes_{\shO_Y} f^*F_{p-n+1}\Dmod_X\to \cdots\to \omega_Y(E)\otimes_{\shO_Y}f^*F_p\Dmod_X\to 0.$$ The pushforward of this complex admits the following interpretation: $$R^0f_*C^{\bullet}_{p-n} = F_{p-n}\omega_X(*D) = I_p(D) \otimes\omega_X((p+1)D)$$ by \cite{hodgeideals}*{Remark 9.3, Corollary 12.1}.\\

We prove similar results in order to obtain a birational definition. Consider the complex $B^{\bullet}$: $$0\to \Dmod_Y\to \Omega^1_Y(\log{E})\otimes_{\shO_Y} \Dmod_Y\to \cdots\to \omega_Y(E)\otimes_{\shO_Y}\Dmod_Y\to 0$$ in degrees $-n,\ldots ,0$, where the map $$\Omega^p_Y(\log{E})\otimes\Dmod_Y \stackrel{d'}{\to} \Omega^{p+1}_Y(\log{E})\otimes\Dmod_Y$$ is given by $\omega\otimes P \to d\omega \otimes P + \sum{(dz_i\wedge \omega)\otimes \partial_i P}$. The complex $B^{\bullet}$ is filtered quasi-isomorphic to the object $\omega_Y(*E)$ in degree 0 \cite{hodgeideals}*{Proposition 3.1}.

\begin{proposition}\label{resolutionmodule} The complex $$W_lB^{\bullet}=0\to \Dmod_Y\to W_l\Omega^1_Y(\log{E})\otimes_{\shO_Y} \Dmod_Y\to \cdots\to W_l\omega_Y(E)\otimes_{\shO_Y}\Dmod_Y\to 0$$ in degrees $-n,\ldots, 0$ is quasi-isomorphic to $W_{n+l}\omega_Y(*E)$.
\end{proposition}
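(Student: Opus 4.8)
The plan is to mimic the proof of the un-weighted statement \cite{hodgeideals}*{Proposition 3.1}, but compatibly with the weight filtrations, and then to identify the resulting degree-$0$ cohomology with $W_{n+l}\omega_Y(*E)$ using the simple-normal-crossings description of the weight filtration recalled in the preliminaries. First I would note that $W_lB^{\bullet}$ is a genuine subcomplex of $B^{\bullet}$: the differential $d'$ sends $\omega\otimes P$ to $d\omega\otimes P + \sum (dz_i\wedge\omega)\otimes\partial_i P$, and both $d\omega$ and $dz_i\wedge\omega$ stay in $W_l\Omega^{p+1}_Y(\log E)$ when $\omega\in W_l\Omega^p_Y(\log E)$, since wedging with a holomorphic $1$-form and applying the de Rham differential do not increase the number of logarithmic poles appearing in a generator of the form $\tfrac{dz_I}{z_I}\wedge dz_J$. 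So the inclusion $W_lB^{\bullet}\hookrightarrow B^{\bullet}$ makes sense, and the claim is that its image computes $W_{n+l}\omega_Y(*E)$ in degree $0$ and nothing in other degrees.

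Next I would reduce to a local computation. On an open set $V$ with coordinates $z_1,\dots,z_n$ where $E=(z_1\cdots z_r=0)$, the complex $B^{\bullet}$ decomposes (up to the Koszul-type bookkeeping of \cite{hodgeideals}*{\S 3}) according to which subset of the "logarithmic" variables $z_1,\dots,z_r$ is used, and the weight truncation $W_l$ simply restricts this to subsets of size $\le l$. Concretely, one can filter $\Omega^{\bullet}_Y(\log E)\otimes\Dmod_Y$ by a Koszul-type double complex, and the point is the standard fact that the logarithmic de Rham complex $\Omega^{\bullet}_Y(\log E)$ has cohomology supported in a way governed by the combinatorics of the components $E_i$; passing to $W_l$ cuts this off. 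I would then invoke the isomorphism (\ref{isoks}), $\gr^W_{n+l}\OY(*E)\cong i_{l+}\shO_{E(l)}$ (with a Tate twist), together with the corresponding statement for the right $\Dmod$-module $\omega_Y(*E)$, to match the associated graded of the two sides: by induction on $l$, using that $B^{\bullet}/W_{l-1}B^{\bullet}$ resolves $\gr^W_{n+l}\omega_Y(*E)$, the five lemma (or a spectral-sequence comparison) upgrades the graded-level quasi-isomorphism to the filtered statement.

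More precisely, the cleanest route is induction on $l$. The base case $l=0$ is $W_0B^{\bullet}=(0\to\Dmod_Y\to 0\to\cdots)$ placed in degree $-n$ — wait, this needs care: $W_l\Omega^p_Y(\log E)$ for $l=0$ is just $\Omega^p_Y$, the ordinary forms, so $W_0B^{\bullet}$ is the ordinary (non-log) complex $\Omega^{\bullet}_Y\otimes\Dmod_Y$, which is the Spencer-type resolution of $\omega_Y=W_n\omega_Y(*E)$; this is classical. For the inductive step, the quotient $W_lB^{\bullet}/W_{l-1}B^{\bullet}$ has terms $\gr^W_l\Omega^p_Y(\log E)\otimes\Dmod_Y$, and by the residue isomorphism $\gr^W_l\Omega^p_Y(\log E)\cong (a_{l})_*\Omega^{p-l}_{E(l)}$ this quotient complex is exactly the pushforward along $i_l$ of the Spencer resolution of $\omega_{E(l)}$ on $E(l)$, shifted appropriately; hence it is quasi-isomorphic to $i_{l+}\shO_{E(l)}\cong\gr^W_{n+l}\omega_Y(*E)$ (with the Tate twist) by Kashiwara's theorem and (\ref{isoks}). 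Then the map of short exact sequences of complexes $0\to W_{l-1}B^{\bullet}\to W_lB^{\bullet}\to \gr^W_lB^{\bullet}\to 0$ together with the filtration $0\to W_{n+l-1}\omega_Y(*E)\to W_{n+l}\omega_Y(*E)\to\gr^W_{n+l}\omega_Y(*E)\to 0$ gives, via the long exact sequences and the inductive hypothesis on the outer terms, the desired quasi-isomorphism; one also checks the higher cohomology sheaves $\mathcal{H}^{-k}(W_lB^{\bullet})$ vanish for $k>0$ by the same dévissage.

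The main obstacle I expect is the identification of the associated-graded complex $\gr^W_lB^{\bullet}$ with (the $i_l$-pushforward of) a Spencer resolution on $E(l)$, and tracking the Tate twist and the degree shift correctly: one has to verify that the differential $d'$ of $B^{\bullet}$ induces, on $\gr^W_l$, precisely the differential of the logarithmic-form side of the residue sequence, i.e.\ that the "$\sum(dz_i\wedge\omega)\otimes\partial_i$" part of $d'$ is compatible with the residue maps and the $\Dmod_{E(l)\to Y}$ transfer module. This is essentially bookkeeping, but it is where the weight filtration on the $\Dmod$-module level (coming from mixed Hodge module theory) meets the combinatorial weight filtration on logarithmic forms, and the compatibility is exactly what makes (\ref{isoks}) applicable. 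Once that compatibility is in hand, the dévissage is formal.
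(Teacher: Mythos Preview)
Your approach is correct but takes a genuinely different route from the paper's. The paper argues directly and locally: for exactness in negative degrees it takes $\eta$ in the kernel of $d'$ on $W_lB^{\bullet}$, uses the known exactness of the full $B^{\bullet}$ to find a preimage $\omega$, expands $\omega$ in a normalized monomial basis $z^{\beta}\tfrac{dz_I}{z_I}\wedge dz_J\otimes\partial^{\alpha}$ (arranged so that no $z_i$ with $i\in I$ divides the coefficient), splits $\omega=\omega_{\le l}+\omega_{>l}$ according to $|I|$, and checks by hand that $d'$ respects this splitting, forcing $d'\omega_{>l}=0$ and hence $d'\omega_{\le l}=\eta$ with $\omega_{\le l}\in W_lB^{\bullet}$. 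The degree-$0$ statement is then read off from (\ref{wsncgen0}). Your inductive d\'evissage via the residue isomorphism $\gr^W_l\Omega^p_Y(\log E)\cong (i_l)_*\Omega^{p-l}_{E(l)}$, identifying $\gr^W_lB^{\bullet}$ with the transfer-module Spencer complex for $E(l)$, is more structural and ties the result directly to (\ref{isoks}); it also explains conceptually why the combinatorial weight on log-forms matches the Hodge-module weight on $\omega_Y(*E)$. The price is the compatibility check you flag---that the induced differential on $\gr^W_lB^{\bullet}$ agrees (up to sign) with the Spencer differential through the residue map, and that the augmentation $W_l\omega_Y(E)\otimes\Dmod_Y\to\omega_Y(*E)$ lands in $W_{n+l}\omega_Y(*E)$ and induces the correct map on graded pieces---which the paper's hands-on splitting sidesteps entirely.
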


\begin{proof} We see first that the complex $W_lB^{\bullet}$ is exact in degrees $-n, \ldots, -1$. Fix a degree $-p$. We need to see that $$W_l\Omega_Y^{n-p-1}(\log{E})\otimes\Dmod_Y \to W_l\Omega_Y^{n-p}(\log{E})\otimes\Dmod_Y \stackrel{b}{\to} W_l\Omega_Y^{n-p+1}(\log{E})\otimes\Dmod_Y$$ is exact. Let $x\in X$ be a point and $\{z_1, \ldots , z_n\}$ be a set of coordinates in an open neighborhood around the point. We localize at $x$, take the completion, and identify the completion of $\shO_{X,x}$ with $\CC\llbracket z_1,\ldots , z_n\rrbracket$. Let $\eta\in\ker{\hat{b}}$. By exactness of $B^{\bullet}$, there exists $\omega$ in the completion of $\Omega_Y^{n-p-1}(\log{E})\otimes\Dmod_Y$ such that $d'\omega = \eta$ (we keep calling $d'$ the differentials of this complex). We can write $\omega = \sum{g_{I, J, \alpha} \frac{dz_I}{z_I}\wedge dz_J \otimes \partial^{\alpha}}$, with $g_{I,J,\alpha}\in\CC\llbracket z_1,\ldots ,z_n\rrbracket$, since every element $P \in \Dmod_Y$ can be written as $P = \sum{g_{\alpha}\partial^{\alpha}}$. Moreover, expanding each $g_{I, J, \alpha}$, we can write $$\omega = \sum{C^{\beta}_{I,J,\alpha}z^{\beta} \frac{dz_I}{z_I}\wedge dz_J \otimes \partial^{\alpha}}$$ so that no $z_i$ that appears in $z_I$ divides $C^{\beta}_{I,J,\alpha}z^{\beta}$. From this description, it follows that for each summand $C^{\beta}_{I,J,\alpha}z^{\beta} \frac{dz_I}{z_I}\wedge dz_J \otimes \partial^{\alpha}$, $|I|$ determines the weight where the form $C^{\beta}_{I,J,\alpha}z^{\beta} \frac{dz_I}{z_I}\wedge dz_J$ lies.\\

Next, we write, $\omega = \omega_{\leq l} + \omega_{>l}$, where the first term consists of the summands with $|I|\leq l$, and the latter of the terms with $|I|>l$. Using the description of $d'$, we see that $d'\omega_{\leq l}$ is in the completion of $W_l\Omega_Y^{n-p}(\log{E})\otimes\Dmod_Y$, and each summand of $d'\omega_{>l}$ is not. Indeed, \begin{equation*}\begin{split} & d'(C^{\beta}_{I,J,\alpha}z^{\beta} \frac{dz_I}{z_I}\wedge dz_J \otimes \partial^{\alpha}) \\ = & \sum_k{C^{\beta}_{I,J,\alpha}\beta_k z^{\beta-e_k}dz_k\wedge \frac{dz_I}{z_I}\wedge dz_J \otimes \partial^{\alpha}} + \sum_k{dz_k\wedge (C^{\beta}_{I,J,\alpha}z^{\beta} \frac{dz_I}{z_I}\wedge dz_J) \otimes \partial_k\partial^{\alpha}} \\ = & \sum_k{((-1)^{|I|}C^{\beta}_{I,J,\alpha}\beta_k)\ z^{\beta-e_k}\ \frac{dz_I}{z_I}\wedge (dz_k\wedge dz_J) \otimes \partial^{\alpha}} \\ & + \sum_k{((-1)^{|I|}C^{\beta}_{I,J,\alpha})\ z^{\beta}\  \frac{dz_I}{z_I}\wedge (dz_k\wedge dz_J) \otimes \partial_k\partial^{\alpha}}.\end{split}\end{equation*} Since $\eta\in\ker{\hat{b}}$, $d'\omega_{>l} = 0$, and $d'\omega_{\leq l} = \eta$, with $\omega_{\leq l}$ in the completion of $W_l\Omega_Y^{n-p-1}(\log{E})\otimes\Dmod_Y$.\\

Consider now the map, $$W_l\omega_Y(E)\otimes_{\shO_Y}\Dmod_Y\to W_{n+l}\omega_Y(*E)$$ given by $\frac{\omega}{f}\otimes P \to \frac{\omega}{f}\cdot P$. Fixing a degree of the Hodge filtration, and using the description of the Hodge filtration of $W_{n+l}\omega_Y(*E)$ (see for example \propositionref{propositionsnc}), we see that this map is surjective. That the kernel is the image of $W_l\Omega^{n-1}_Y(\log{E})\otimes_{\shO_Y} \Dmod_Y$ follows from \cite{hodgeideals}*{Proposition 3.1} and an argument similar to the one above.

\end{proof}

Consider next the complex $$W_lA^{\bullet} = 0\to f^*\Dmod_X\to W_l\Omega^1_Y(\log{E})\otimes_{\shO_Y} f^*\Dmod_X\to \cdots\to W_l\omega_Y(E)\otimes_{\shO_Y}f^*\Dmod_X\to 0.$$ We have that $W_lA^{\bullet} = W_lB^{\bullet}\otimes_{\Dmod_Y}\Dmod_{Y\to X}$, where $\Dmod_{Y\to X}= \shO_Y\otimes_{f^{-1}\OX}f^{-1}\Dmod_X$ is the transfer module. Note that when we see it as an $\shO_Y$ module, we simply write $f^*\Dmod_X$. 

\begin{lemma}\label{lemmatorsion} The complex $W_lA^{\bullet}$ represents $W_{n+l}\omega_Y(*E) \Ltensor_{\Dmod_Y} \Dmod_{Y\to X}$ in the derived category of filtered right $f^{-1}\Dmod_X$-modules. \end{lemma}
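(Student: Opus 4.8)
The plan is to follow, with the weight filtration inserted throughout, the argument of \cite{hodgeideals}*{\textsection 3} showing that $A^{\bullet}$ represents $\omega_Y(*E)\Ltensor_{\Dmod_Y}\Dmod_{Y\to X}$. Since by construction $W_lA^{\bullet} = W_lB^{\bullet}\otimes_{\Dmod_Y}\Dmod_{Y\to X}$, it is enough to establish two things: (i) the complex $W_lB^{\bullet}$ of right $\Dmod_Y$-modules is a resolution of $W_{n+l}\omega_Y(*E)$, strict with respect to the order filtration $F_{\bullet}\Dmod_Y$; and (ii) every term of $W_lB^{\bullet}$ is flat over $\Dmod_Y$ (indeed $F$-filtered flat), so that the underived tensor product $W_lB^{\bullet}\otimes_{\Dmod_Y}\Dmod_{Y\to X}$ already computes $W_{n+l}\omega_Y(*E)\Ltensor_{\Dmod_Y}\Dmod_{Y\to X}$ in the filtered derived category of right $f^{-1}\Dmod_X$-modules.

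Item (i) is \propositionref{resolutionmodule}; its proof proceeds one Hodge-filtration degree at a time, hence is strict for $F_{\bullet}$, so it yields a filtered quasi-isomorphism $W_lB^{\bullet}\simeq W_{n+l}\omega_Y(*E)$. For (ii), the term of $W_lB^{\bullet}$ in degree $-p$ is $W_l\Omega^{n-p}_Y(\log E)\otimes_{\shO_Y}\Dmod_Y$ with its induced filtration $W_l\Omega^{n-p}_Y(\log E)\otimes_{\shO_Y}F_{\bullet}\Dmod_Y$. In local coordinates $z_1,\dots,z_n$ with $E = (z_1\cdots z_r = 0)$, the sheaf $W_l\Omega^{n-p}_Y(\log E)$ is $\shO_Y$-free of finite rank, a basis being the forms $\tfrac{dz_I}{z_I}\wedge dz_J$ with $I\subseteq\{1,\dots,r\}$, $J\subseteq\{r+1,\dots,n\}$, $\abs{I}+\abs{J}=n-p$ and $\abs{I}\le l$. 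Consequently $W_l\Omega^{n-p}_Y(\log E)\otimes_{\shO_Y}\Dmod_Y$ is, as a filtered right $\Dmod_Y$-module, locally a finite direct sum of copies of $\Dmod_Y$ with its order filtration; in particular it is flat over $\Dmod_Y$ and its $\gr^F$ is flat over $\gr^F\Dmod_Y$, so $\argbl\otimes_{\Dmod_Y}\Dmod_{Y\to X}$ is exact on it and strictly compatible with the filtrations. Finally, for any locally $\shO_Y$-free $\shE$ one has $\shE\otimes_{\shO_Y}\Dmod_Y\otimes_{\Dmod_Y}\Dmod_{Y\to X}\cong\shE\otimes_{\shO_Y}\Dmod_{Y\to X}=\shE\otimes_{\shO_Y}f^*\Dmod_X$, and under this the differentials $d'$ and the filtration of $W_lB^{\bullet}\otimes_{\Dmod_Y}\Dmod_{Y\to X}$ become exactly those of $W_lA^{\bullet}$ — the same bookkeeping as in the unweighted case.

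The only genuinely new input beyond \cite{hodgeideals}*{\textsection 3} is \propositionref{resolutionmodule}, which is already established; everything else is formal once one knows that $W_l\Omega^p_Y(\log E)$ is locally $\shO_Y$-free. Accordingly the main point that requires care is the \emph{filtered} (strictness) part of the statement — that both the resolution $W_lB^{\bullet}$ and the tensor product with $\Dmod_{Y\to X}$ are strict for $F_{\bullet}$ — but, as indicated above, the local $\shO_Y$-freeness of $W_l\Omega^p_Y(\log E)$ reduces this to the corresponding elementary property of $\Dmod_Y$ itself, exactly as for $A^{\bullet}$.
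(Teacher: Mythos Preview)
Your argument rests on the assertion that $W_l\Omega^p_Y(\log E)$ is locally $\shO_Y$-free, but this is false in general. Take $Y=\CC^3$ with $E=(z_1z_2z_3=0)$. Writing $e_{ij}=\tfrac{dz_i}{z_i}\wedge\tfrac{dz_j}{z_j}$ for the basis of $\Omega^2_Y(\log E)$, one checks that
\[
W_1\Omega^2_Y(\log E)=(z_1,z_2)\,e_{12}\oplus(z_1,z_3)\,e_{13}\oplus(z_2,z_3)\,e_{23},
\]
and the ideal $(z_i,z_j)\subseteq\shO_{\CC^3}$ is not locally free at the origin (it has projective dimension $1$). Your proposed ``basis'' also forces $J\subseteq\{r+1,\dots,n\}$, which in this example would make $W_1\Omega^2_Y(\log E)$ zero; allowing $J$ to meet $\{1,\dots,r\}$ gives generators, but not a basis. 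So the step ``$W_l\Omega^{n-p}_Y(\log E)\otimes_{\shO_Y}\Dmod_Y$ is locally free over $\Dmod_Y$, hence flat'' fails, and with it your treatment of both the Tor-vanishing and the filtered strictness.

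The paper circumvents this by not claiming flatness over $\Dmod_Y$ at all. Instead it uses the change-of-rings spectral sequence
\[
\shTor_p^{\Dmod_Y}\bigl(\shTor_q^{\shO_Y}(W_l\Omega^k_Y(\log E),\Dmod_Y),\ \Dmod_{Y\to X}\bigr)\ \Rightarrow\ \shTor^{\shO_Y}_{p+q}\bigl(W_l\Omega^k_Y(\log E),\ f^*\Dmod_X\bigr),
\]
which collapses (since $\Dmod_Y$ is $\shO_Y$-locally free) to give
$\shTor_p^{\Dmod_Y}(W_l\Omega^k_Y(\log E)\otimes_{\shO_Y}\Dmod_Y,\,\Dmod_{Y\to X})\cong\shTor^{\shO_Y}_p(W_l\Omega^k_Y(\log E),\,f^*\Dmod_X)$; the right side vanishes for $p>0$ because $f^*\Dmod_X$ is locally $\shO_Y$-free, regardless of whether $W_l\Omega^k_Y(\log E)$ is. That is the missing idea: acyclicity for $-\otimes_{\Dmod_Y}\Dmod_{Y\to X}$ comes from local freeness of $\Dmod_{Y\to X}$ over $\shO_Y$, not from any freeness property of $W_l\Omega^k_Y(\log E)$.
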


\begin{proof} It is enough to show that the elements $W_lB^k$ are acyclic with respect to $-\otimes_{\Dmod_Y}\Dmod_{Y\to X}$.
For any $k$ consider the following spectral sequence: $$E_2^{p,q} = \shTor_p^{\Dmod_Y}(\shTor_q^{\OY}(W_l\Omega^k_Y(\log{E}), \Dmod_Y), \Dmod_{Y\to X}) \Rightarrow \shTor^{\OY}_{p+q}(W_l\Omega^k_Y(\log{E}), f^*\Dmod_{X})$$ \cite{weibel}*{Theorem 5.6.6}. As $\Dmod_Y$ is a locally free $\OY$-module, then $E_2^{p,q}=0$ for $q\neq 0$. Therefore, $$\shTor_p^{\Dmod_Y}(W_l\Omega^k_Y(\log{E})\otimes_{\shO_Y}\Dmod_Y, \Dmod_{Y\to X}) \cong \shTor^{\OY}_p(W_l\Omega^k_Y(\log{E}), f^*\Dmod_X)=0$$ for $p\neq 0$, where the last equality follows from the fact that $f^*\Dmod_X$ is locally free. \\

\end{proof}

The map $$R^0f_*(W_{n+l}\omega_Y(*E) \Ltensor_{\Dmod_Y} \Dmod_{Y\to X}) \stackrel{\varphi}{\to} R^0f_*(\omega_Y(*E) \Ltensor_{\Dmod_Y} \Dmod_{Y\to X})$$ is precisely the morphism $$H^0f_+(W_{n+l}\omega_Y(*E)) \to H^0f_+(\omega_Y(*E)) = \omega_X(*D),$$ whose image is $W_{n+l}\omega_X(*D)$. Moreover, the complex $C_{p-n}^{\bullet}$ described above corresponds to the $F_{p-n}(\omega_Y(*E)\Ltensor_{\Dmod_Y} \Dmod_{Y\to X})$ using the identification $$\omega_Y(*E)\Ltensor_{\Dmod_Y} \Dmod_{Y\to X} \cong B^{\bullet}\otimes_{\Dmod_Y} \Dmod_{Y\to X}.$$ By strictness, there is an injective map $$R^0f_*C^{\bullet}_{p-n} \into R^0f_*A^{\bullet} \cong \omega_X(*D)$$ whose image is $F_{p-n}\omega_X(*D) = I_p(D) \otimes \omega_X((p+1)D)$ (see \cite{hodgeideals}*{Sections 4, 9, and 12}).\\

Similarly, we define $C^{\bullet}_{l,p-n}$ by $$ 0\to f^*F_{p-n}\Dmod_X\to W_l\Omega^1_Y(\log{E})\otimes_{\shO_Y} f^*F_{p-n+1}\Dmod_X\to \cdots\to W_l\omega_Y(E)\otimes_{\shO_Y}f^*F_p\Dmod_X\to 0$$ which corresponds to $F_{p-n}(W_{n+l}\omega_Y(*E)\Ltensor_{\Dmod_Y} \Dmod_{Y\to X})$ under the identification $$W_{n+l}\omega_Y(*E)\Ltensor_{\Dmod_Y} \Dmod_{Y\to X} \cong W_lB^{\bullet}\otimes_{\Dmod_Y} \Dmod_{Y\to X}$$ given by \lemmaref{lemmatorsion}.

\begin{proposition}\label{birationaldef} Using the notation above, $$I^{W_l}_p(D) \otimes \omega_X((p+1)D) = \im[R^0f_*C^{\bullet}_{l,p-n} \into R^0f_*W_lA^{\bullet} \to R^0f_*A^{\bullet}\cong \omega_X(*D)].$$
\end{proposition}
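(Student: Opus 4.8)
The plan is to mimic the proof of the birational description of the Hodge ideals from \cite{hodgeideals}*{\textsection 9, 12}, but now carrying the weight filtration along throughout. The starting point is \lemmaref{lemmatorsion}, which tells us that $W_lA^{\bullet}$ represents the complex $W_{n+l}\omega_Y(*E)\Ltensor_{\Dmod_Y}\Dmod_{Y\to X}$ in the filtered derived category, with $C^{\bullet}_{l,p-n}$ being its $F_{p-n}$-subcomplex. Applying $R^0f_*$ to the map of complexes $W_lB^{\bullet}\otimes_{\Dmod_Y}\Dmod_{Y\to X}\to B^{\bullet}\otimes_{\Dmod_Y}\Dmod_{Y\to X}$ (induced by the inclusion $W_l\Omega^{\bullet}_Y(\log E)\hookrightarrow \Omega^{\bullet}_Y(\log E)$) gives, on cohomology in degree $0$, exactly the pushforward map $H^0f_+(W_{n+l}\omega_Y(*E))\to H^0f_+(\omega_Y(*E))=\omega_X(*D)$, whose image is $W_{n+l}\omega_X(*D)$ by the $E_2$-degeneration of the weight spectral sequence (\ref{ss1}) together with the fact that $W_{n+l}\omega_X(*D)$ is the image of $W_{n+l}$ of the pushforward; this is the content already recalled just before the statement.

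The heart of the argument is the compatibility of $R^0f_*$ with the Hodge filtration, i.e.\ strictness. One would argue as in \cite{hodgeideals}: since $W_{n+l}\omega_Y(*E)$ underlies the mixed Hodge module $W_{n+l}(j_*\QQ^H_{E^c}[n])$ on $Y$ (with $E^c$ the complement of $E$) and $f$ is proper, Saito's theory gives that $f_+$ is strict with respect to $F_{\bullet}$; hence the induced filtration on $H^0f_+(W_{n+l}\omega_Y(*E))$ is precisely $F_{p-n}\mapsto \im\big[R^0f_*C^{\bullet}_{l,p-n}\to R^0f_*W_lA^{\bullet}\big]$. Composing with the (injective, by strictness again) map $R^0f_*W_lA^{\bullet}\to R^0f_*A^{\bullet}\cong\omega_X(*D)$, and using that the filtration on $\omega_X(*D)$ induced from $H^0f_+\omega_Y(*E)$ is the Hodge filtration, one concludes that the image in question equals $F_{p-n}W_{n+l}\omega_X(*D)$. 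By the definition of the weighted Hodge ideals (in their right $\Dmod$-module incarnation, $F_{p-n}W_{n+l}\omega_X(*D)=I_p^{W_l}(D)\otimes\omega_X((p+1)D)$), this is exactly the asserted identity.

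The step I expect to be the main obstacle is making the strictness argument fully rigorous in this weighted setting: one must know that $R^if_*W_lA^{\bullet}$ behaves like $H^if_+$ of the underlying $\Dmod$-module of a mixed Hodge module — equivalently, that $W_{n+l}\omega_Y(*E)$ together with $W_lB^{\bullet}$ is a well-behaved filtered $\Dmod_Y$-module to which Saito's strictness and the filtered projection formula apply. Here \propositionref{resolutionmodule} is essential, since it identifies $W_lB^{\bullet}$ as a genuine (filtered) resolution of $W_{n+l}\omega_Y(*E)$ by $\Dmod_Y$-modules that are acyclic for $\otimes_{\Dmod_Y}\Dmod_{Y\to X}$ (\lemmaref{lemmatorsion}); with that in hand, the strictness of $f_+$ for the mixed Hodge module $j_*\QQ^H[n]$ — and the compatibility of its weight filtration with pushforward, already used to identify the image as $W_{n+l}\omega_X(*D)$ — transports verbatim to the subobject $W_{n+l}$. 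The remaining verifications (that $\omega_X(*D)$ inherits the correct Hodge filtration, and that everything is compatible with the pole-order twists $\omega_X((p+1)D)$) are exactly as in \cite{hodgeideals} and require no new input.
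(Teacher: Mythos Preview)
Your approach is essentially the paper's own: use strictness of the proper pushforward $f_+$ (applied to the mixed Hodge module underlying $W_{n+l}\omega_Y(*E)$) to identify the image of $R^0f_*C^{\bullet}_{l,p-n}$ inside $R^0f_*W_lA^{\bullet}$ with $F_{p-n}H^0f_+(W_{n+l}\omega_Y(*E))$, and then use that the map $H^0f_+(W_{n+l}\omega_Y(*E))\to H^0f_+(\omega_Y(*E))=\omega_X(*D)$ underlies a morphism of mixed Hodge modules, hence is strict with respect to $F$, so that the image of $F_{p-n}$ is $F_{p-n}$ of the image, namely $F_{p-n}W_{n+l}\omega_X(*D)$.

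One correction: you assert that the map $R^0f_*W_lA^{\bullet}\to R^0f_*A^{\bullet}$ is ``injective, by strictness again''. This is neither what strictness says nor what you need. Strictness of a filtered morphism means precisely that the image of $F_{p-n}$ equals $F_{p-n}$ of the image; it does not imply injectivity of the underlying map. In fact the map $H^0f_+(W_{n+l}\omega_Y(*E))\to H^0f_+(\omega_Y(*E))$ need not be injective in general (a possible kernel comes from $H^{-1}f_+$ of the quotient $\omega_Y(*E)/W_{n+l}$). Fortunately injectivity is irrelevant here: you only need to compute the \emph{image} of the composition, and for that the strictness of the second map (as a morphism of mixed Hodge modules) is exactly the right tool, as the paper's proof makes explicit. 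Once you replace ``injective'' by ``strict'' in that sentence, your argument is complete and coincides with the paper's.
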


 \begin{proof} By strictness, we have an injective map $$R^0f_*C^{\bullet}_{l,p-n} \into R^0f_*W_lA^{\bullet}$$ whose image is $F_{p-n}H^0f_+W_{n+l}\omega_X(*D)$ (see for instance \cite{hodgeideals}*{\textsection{4}}). Taking the composition $$R^0f_*C^{\bullet}_{l,p-n} \into R^0f_*W_lA^{\bullet} \to R^0f_*A^{\bullet}\cong \omega_X(*D),$$ and using strictness in the middle morphism (since it underlies a morphism of mixed Hodge modules), the image corresponds to $F_{p-n}W_{n+l}\omega_X(*D) = I^{W_l}_p(D) \otimes \omega_X((p+1)D).$
\end{proof}

The description in \propositionref{birationaldef} for $I_0^{W_l}(D)$ coincides with the description in \cite{olano22a}*{Proposition 3}, since $f_*W_l\omega_Y(E) \to f_*\omega_Y(E)$ is an inclusion. The complex $C^{\bullet}_{l,1-n}$ also has a simple description. Recall that by definition $$C^{\bullet}_{l,1-n} = [W_l\Omega^{n-1}_Y(\log{E}) \to W_l\omega_Y(E)\otimes f^*F_1\Dmod_X]$$ in degrees -1 and 0. Moreover, the map $$\Omega^{n-1}_Y(\log{E}) \to \omega_Y(E)\otimes f^*F_1\Dmod_X$$ is injective \cite{hodgeideals}*{Lemma 3.4}. Using the fact that $W_l\Omega^{n-1}_Y(\log{E}) \into \Omega^{n-1}_Y(\log{E})$ and $W_l\omega_Y(E)\otimes f^*F_1\Dmod_X \into \omega_Y(E)\otimes f^*F_1\Dmod_X$ are injective (since $F_1\Dmod_X$ is a locally free $\OX$-module), we obtain that the differential in $C^{\bullet}_{l,1-n}$ is also an inclusion. Let $\mathcal{F}_{l,1}$ be the cokernel. This means that $$I_1^{W_l}(D) \otimes \omega_X(2D) = \im[f_*\mathcal{F}_{l,1} \to \omega_X(D)].$$ This map can be interpreted by using the complex $C^{\bullet}_{1-n}$. Indeed, let $\mathcal{F}_1$ be the cokernel of the differential in $C^{\bullet}_{1-n}$. We have an induced map $\mathcal{F}_{l,1} \to \mathcal{F}_1$. Since $f_*\mathcal{F}_1 = I_1(D)\otimes\omega_X(2D)$, $$I_1^{W_l}(D) \otimes \omega_X(2D) = \im[f_*\mathcal{F}_{l,1} \to f_*\mathcal{F}_1].$$

Note that since weighted Hodge ideals were defined in terms of the Hodge and weight filtrations of $\OX(*D)$, the constructions presented in this section are independent of the resolution of singularities.

\subsection{Weighted Hodge ideals and $V$-filtration}
Let $X$ be a smooth variety and $D$ be an effective reduced divisor defined by the global equation $f\in\OX(X)$. The Hodge ideals $I_p(D)$ can be described using the $V$-filtration of $i_+\OX$, where $i$ is the graph embedding defined by $f$. Namely, \begin{equation}\label{hivfiltration} I_p(D) = \left\{ \sum_{j=0}^p Q_j(1)f^{p-j}v_j \ : \  \sum_{j=0}^p v_j\partial_t^j\delta\in V^1i_+\OX\right\},\end{equation} where $Q_j(x) = \prod_{i=0}^{j-1}(x+i)$, \cite{mustatapopa20b}*{Theorem A'}. An equivalent description is obtained using the following map: $$\tau: V^1i_+\OX \to \OX(*D)$$ given by $$\tau\left(\sum_{i=0}^p v_i\partial_t^i\delta\right) = \sum_{i=0}^p Q_i(1)\frac{v_i}{f^{i+1}}.$$ The map $\tau$\footnote{The map $\tau$ corresponds to $\tau_1$ in the notation of \cite{mustatapopa20}. See \textsection{1} for the discussion about the reduced case.} is a surjective morphism of $\Dmod_X$-modules, and \begin{equation}\label{hodgevfiltration} I_p(D)\otimes\OX((p+1)D) = F_p\OX(*D) = \tau(F_{p+1}V^1i_+\OX),\end{equation} see \cite{mustatapopa20}*{Proposition 5.4 and Lemma 5.1}. Moreover, the map $\tau$ induces a map $$\bar{\tau}: \gr^1_Vi_+\OX \to \OX(*D)/\OX.$$ Indeed, it is enough to see that $\tau(V^{>1}i_+\OX ) \subseteq \OX$. This follows from the fact that $V^{>1}i_+\OX = V^{1+\alpha}i_+\OX = t\cdot V^{\alpha}i_+\OX$, with $\alpha >0$, and that if $j>0$, $tu\partial_t^j\delta = fu\partial_t^j\delta - ju\partial_t^{j-1}\delta$, and $tu\delta = fu\delta$. For $v=\sum_{j=0}^p v_j\partial_t^j\delta\in V^{>1}i_+\OX$, there exists $u = \sum_{j=0}^p u_j\partial_t^j\delta\in V^{\alpha}i_+\OX$ such that, $tu = v$. Hence, $$\tau(v) = \tau(fu_0\delta + \sum_{j=1}^p (fu_j\partial_t^j\delta - ju_j\partial_t^{j-1}\delta)) = u_0,$$ as $$Q_j(1) \frac{fu}{f^{j+1}} - jQ_{j-1}(1)\frac{u}{f^j} = 0$$ because $Q_j(1) = jQ_{j-1}(1)$.\\

The $\Dmod_X$-module $\gr^1_Vi_+\OX$ underlies the mixed Hodge module $\psi_{f,1}\OX$ and its weight filtration can be described in terms of the nilpotent operator $t\partial_t$. In order to complete the description in \theoremref{whivfiltration}, we first need to show that the map $\bar{\tau}$ also preserves the weight filtration.

\begin{proposition}\label{grvtau} The map $\bar{\tau}$ sends the weight and Hodge pieces to the same image as the map $\tau_{\Dmod_X}$ that underlies a morphism of mixed Hodge modules $$\tau^H: \psi_{f,1}\OX(-1) \to \cohH^1_D(\OX).$$ \end{proposition}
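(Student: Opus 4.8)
The plan is to compare the purely $\Dmod$-module-theoretic map $\bar\tau$ with the underlying map $\tau_{\Dmod_X}$ of the morphism of mixed Hodge modules $\tau^H\colon \psi_{f,1}\OX(-1)\to\cohH^1_D(\OX)$, and show they agree. Once this is done, the statement follows immediately: a morphism of mixed Hodge modules is strict for both the weight and the Hodge filtrations, so $\tau_{\Dmod_X}$ automatically sends $W_{\bullet}$ and $F_{\bullet}$ to $W_{\bullet}$ and $F_{\bullet}$, and hence so does $\bar\tau$. So the real content is the identification $\bar\tau=\tau_{\Dmod_X}$ (up to the Tate twist bookkeeping), and the real work is checking that the source and target of $\bar\tau$ are the $\Dmod$-modules underlying $\psi_{f,1}\OX(-1)$ and $\cohH^1_D(\OX)$ respectively, and that the prescription defining $\bar\tau$ coincides with Saito's canonical morphism.

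First I would recall the identifications of the $\Dmod$-modules involved. The module $\gr^1_V i_+\OX$ underlies $\psi_{f,1}\OX$, as already noted in the excerpt; the Tate twist $(-1)$ only shifts weights and Hodge degrees and does not change the underlying $\Dmod$-module, so the source of $\tau_{\Dmod_X}$ is $\gr^1_V i_+\OX$, matching the source of $\bar\tau$. For the target, $\cohH^1_D(\OX) = \OX(*D)/\OX$ as $\Dmod_X$-modules, with the mixed Hodge module structure coming from the exact triangle relating $\OX$, $\OX(*D)$ and $\cohH^1_D(\OX)$ (equivalently $j_*\QQ^H_U[n]$ and $\QQ^H_X[n]$). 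Thus the target of $\bar\tau$ is correct as well. The canonical morphism $\can\colon\psi_{f,1}\to\phi_{f,1}$ and the identification of $\phi_{f,1}\OX$ (or rather the appropriate variant) with $\cohH^1_D(\OX)$ — this is exactly the content used in \cite{mustatapopa20}, where $\tau_1$ is defined — give the morphism $\tau^H$ in the category of mixed Hodge modules; I would cite this rather than reconstruct it.

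Next I would verify that $\bar\tau$ and $\tau_{\Dmod_X}$ are literally the same $\Dmod_X$-linear map. Both are determined by their effect on generators of $\gr^1_V i_+\OX$ over $\Dmod_X$, namely classes of elements $\sum_j v_j\partial_t^j\delta\in V^1 i_+\OX$. The map $\tau$ is defined by $\tau(\sum_i v_i\partial_t^i\delta)=\sum_i Q_i(1)v_i/f^{i+1}$, and the computation in the excerpt showing $\tau(V^{>1}i_+\OX)\subseteq\OX$ is precisely what lets it descend to $\bar\tau$ on $\gr^1_V$ with values in $\OX(*D)/\OX$. On the other side, Saito's $\tau_1$ from \cite{mustatapopa20}*{\textsection 1} is given by the same formula on $V$-filtration representatives — this is how it is defined there — so the two maps coincide on the nose. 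Hence $\bar\tau = \tau_{\Dmod_X}$, and the conclusion about weight and Hodge pieces follows from strictness of morphisms of mixed Hodge modules.

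The main obstacle is not any single hard computation but rather the careful bookkeeping: making sure the Tate twist $(-1)$ is placed correctly so that $\bar\tau$ really is $F$- and $W$-filtered (and not off by a shift), and making sure that the mixed Hodge module underlying $\cohH^1_D(\OX)$ is the one whose weight and Hodge filtrations match the restriction of those on $\OX(*D)$ — i.e. that the quotient map $\OX(*D)\to\OX(*D)/\OX$ is strict, which it is, being a morphism of mixed Hodge modules. With these normalizations pinned down the proposition is essentially a citation of the definition of $\tau_1$ in \cite{mustatapopa20} together with strictness; I would present it that way.
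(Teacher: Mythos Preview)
Your proposal has a genuine gap at the crucial step. You assert that $\bar\tau$ equals $\tau_{\Dmod_X}$ ``on the nose'' by citing \cite{mustatapopa20}, claiming that Saito's $\tau_1$ there is given by the same formula. But $\tau_1$ in \cite{mustatapopa20} is precisely the $\Dmod_X$-module map called $\tau$ here (see the footnote in the paper); what is \emph{not} established in that reference is that the induced map $\bar\tau$ on $\gr^1_V$ underlies a morphism of mixed Hodge modules. The morphism $\tau^H$ has to be constructed abstractly --- as the paper does, via the cokernel of $\mathrm{Var}\colon\phi_{f,1}\OX\to\psi_{f,1}\OX(-1)$, which is $\cohH^1_D(\OX)$ by \cite{saito90}*{Corollary~2.24} and \cite{saito09}*{\textsection 2.2} --- and then compared with the concrete $\bar\tau$. (Your parenthetical ``$\phi_{f,1}\OX$, or rather the appropriate variant, identified with $\cohH^1_D(\OX)$'' is not right either: it is the cokernel of $\mathrm{Var}$, not $\phi_{f,1}$ itself, that gives $\cohH^1_D(\OX)$.) So you are essentially citing the very thing that needs to be proved.

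The paper's argument addresses exactly this issue. It shows that both $\bar\tau$ and $\tau_{\Dmod_X}$ are surjective with the same kernel, namely the image of $\partial_t t-1$ on $\gr^1_V i_+\OX$ (which underlies $\mathrm{Var}\circ\mathrm{can}$). Hence the two maps differ only by a $\Dmod_X$-automorphism of $\cohH^1_D(\OX)$. The ingredient you are missing is then \lemmaref{lemmaautomorphism}: any $\Dmod_X$-automorphism of $\cohH^1_D(\OX)$ is multiplication by a nonzero scalar, and therefore preserves both the Hodge and weight filtrations. Without this lemma (or the Riemann--Hilbert shortcut of \remarkref{remarkreferee}) you cannot pass from ``same source, same target, both $\Dmod_X$-linear'' to ``same images of the weight and Hodge pieces,'' and your appeal to strictness never gets off the ground.
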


\begin{proof} The map $\bar{\tau}$ is surjective and using its description, we observe that its kernel is the image of the map $\partial_t t -1$ on $\gr^1_Vi_+\OX$. The same is true for the map $\tau_{\Dmod_X}$. Indeed, the map $\partial_t t -1$ underlies the composition $Var\circ can$ on $\psi_{f,1}\OX$. As $can: \psi_{f,1}\OX \to \phi_{f,1}\OX$ is surjective because $i_+\OX$ has strict support (see for instance \cite{schnellsurvey}*{\textsection 11}), the cokernel of $Var\circ can$ coincides with the cokernel of $$Var: \phi_{f,1}\OX \to \psi_{f,1}\OX(-1).$$ The cokernel of $Var$ is isomorphic to $i_{D*}\cohH^1i_D^!\OX$, where $i_D: D \to X$ is the inclusion \cite{saito90}*{Corollary 2.24}. Moreover, $i_{D*}\cohH^1i_D^!\OX$ is isomorphic to $\cohH^1_D(\OX)$ \cite{saito09}*{\textsection 2.2}. This means that $\bar{\tau}$ and $\tau_{\Dmod_X}$ could only differ by a $\Dmod_X$-automorphism of $\cohH^1_D(\OX)$ and the result is a consequence of \lemmaref{lemmaautomorphism}.
\end{proof}

\begin{lemma}\label{lemmaautomorphism} A $\Dmod_X$-automorphism of $\cohH^1_D(\OX)$ preserves the Hodge and weight filtration.
\end{lemma}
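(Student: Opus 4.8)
The plan is to show that the space of $\Dmod_X$-linear endomorphisms of $\cohH^1_D(\OX)$ is as small as possible; concretely, I expect that $\End_{\Dmod_X}(\cohH^1_D(\OX))$ is spanned by the identity once one restricts to a suitable local model, and then any automorphism is a nonzero scalar, which manifestly preserves any filtration. The key structural input is that $\cohH^1_D(\OX) = i_{D*}\cohH^1 i_D^!\OX$ is supported on $D$, so by Kashiwara's equivalence the relevant endomorphisms are controlled by the singular locus of $D$; in fact the module has strict support along the components of $D$ (it underlies the mixed Hodge module appearing as the cokernel of $Var$ in \propositionref{grvtau}), and a simple $\Dmod$-module with strict support has endomorphism ring equal to $\CC$. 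More precisely, $\cohH^1_D(\OX)$ need not be simple, but its composition factors are intermediate extensions $j_{!*}$ of simple local systems from smooth loci, and one checks that no nontrivial extensions or isotypic multiplicities occur here: the top weight-graded piece is the intersection cohomology $\Dmod$-module of $D$ itself, which is simple with strict support, and it appears with multiplicity one.

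First I would reduce to the local analytic setting and decompose $\cohH^1_D(\OX)$ along the irreducible components of $D$ using strict support decomposition for pure (or the weight-graded pieces of mixed) Hodge modules; an automorphism must respect this decomposition. Second, on each piece I would use that $\gr^W_\bullet \cohH^1_D(\OX)$ is a direct sum of intersection complexes $\IC$ of distinct strict supports with multiplicity one — this is where I would invoke the explicit description via the $V$-filtration and \propositionref{grvtau}, identifying $\cohH^1_D(\OX)$ with (a twist of) a quotient of $\psi_{f,1}\OX$ — so that $\End$ of each weight-graded piece is already $\CC$ on each isotypic summand. Third, I would propagate this up the weight filtration: since $\Ext^1$ between the relevant non-isomorphic simple Hodge modules that appear in adjacent weight-graded pieces is controlled by a morphism of mixed Hodge structures and hence is strict, an endomorphism inducing a scalar on each $\gr^W$ must itself be that scalar plus a nilpotent part, and the nilpotent part is forced to vanish by weight reasons on a polarizable mixed Hodge module underlying $\cohH^1_D(\OX)$ (or, more elementarily, because $\cohH^1 i_D^! \OX$ carries a pure Hodge structure fiberwise at the generic point of each component). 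Hence the automorphism is a scalar, which preserves $F_\bullet$ and $W_\bullet$ trivially.

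The main obstacle will be the multiplicity-one claim: ruling out that two distinct composition factors of $\cohH^1_D(\OX)$ are isomorphic as $\Dmod_X$-modules, which would create a $2\times 2$ block in the endomorphism ring and allow a non-scalar (unipotent, say) automorphism that could in principle move the Hodge filtration even though it fixes the weight filtration. I would handle this by working one irreducible component of $D$ at a time, where $\cohH^1_D(\OX)$ restricted to a neighborhood of the generic point of that component is just $\CC[*D]/\CC$ for a smooth divisor, which is the simple module $i_{D,*}\OX$ with $\End = \CC$; the only subtlety is at deeper strata, but there the module is still generated by its sections supported at the generic point, so an endomorphism is determined by its effect there, forcing it to be scalar. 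If a fully self-contained argument along these lines proves delicate, the fallback — and this is presumably what \remarkref{remarkreferee} refers to — is the referee's simpler route: observe directly from the surjection $\OX(*D)\onto \cohH^1_D(\OX)$ and the description of $F_\bullet$ and $W_\bullet$ on $\OX(*D)$ that both filtrations on $\cohH^1_D(\OX)$ are intrinsically characterized (e.g. $F_p$ as the image of $F_p\OX(*D)$, and $W_{n+l}$ via the weight filtration of $\OX(*D)$), hence canonical, hence preserved by every $\Dmod_X$-automorphism.
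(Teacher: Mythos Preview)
Your primary route --- analyzing composition factors and then ``propagating up the weight filtration'' --- has a circularity at the third step. When you kill the nilpotent part of $\varphi$ ``by weight reasons on a polarizable mixed Hodge module'', you are treating $\varphi$ as a morphism of mixed Hodge modules; but $\varphi$ is only $\Dmod_X$-linear, and upgrading this is precisely what the lemma asserts. For the same reason you cannot assume a priori that $\varphi$ preserves $W_\bullet$ in order to discuss its action on $\gr^W$. A purely $\Dmod$-theoretic version of the argument would need both that all simple subquotients of $\cohH^1_D(\OX)$ are pairwise non-isomorphic and that the successive extensions do not split (so that the scalars on the graded pieces are forced to coincide); you establish neither, and neither is obvious for general reduced $D$.

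Your last fallback is also not a proof. That $F_\bullet$ and $W_\bullet$ on $\cohH^1_D(\OX)$ arise as images of filtrations on $\OX(*D)$ makes them canonical in the sense of choice-independence, but a $\Dmod_X$-automorphism of the quotient has no reason to lift to one of $\OX(*D)$, hence no reason to respect those images. (The referee's shortcut recorded in \remarkref{remarkreferee} is not this either: it passes through Riemann--Hilbert and Verdier duality to reduce to endomorphisms of the perverse sheaf $\QQ_D[n-1]$.)

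The idea in your proposal that does work is the generic-point argument you sketch when discussing the ``main obstacle'': near a smooth point of $D$ the module is simple, so $\varphi$ is a scalar there, and since $\OX(*D)/\OX$ has no nonzero sections supported in codimension $\geq 2$ in $X$, this scalar determines $\varphi$ globally. That is exactly the paper's proof, carried out by hand: on an affine chart $\Spec R$ with $D=(f)$ one writes $\varphi[1/f^m]=[g_m/f^m]$, uses $\Dmod$-linearity to force $T(g_m)\in(f^{m-1})$ for every derivation $T$, so $g_m$ is locally constant along the smooth locus of $D$; one checks this constant $\lambda$ is independent of $m$ and of the point, and concludes $\varphi=\lambda\cdot\id$. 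So the correct kernel is present in your sketch; the surrounding $\Ext$/MHM scaffolding is both unnecessary and, as written, circular.
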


\begin{proof} We can restrict to an open affine subset. Let $X = \Spec{R}$ where $D$ is defined by $f\in R$, and $\varphi$ an $\Dmod_R$-automorphism of $R_f/R$. Let $m\geq 2$, then $\varphi[\frac{1}{f^m}] = [\frac{g_m}{f^m}]$ for some $g_m\in R$, since $f^m \varphi[\frac{1}{f^m}] = 0$. Using that $\varphi$ is $\Dmod_R$-linear, we see that for every $T\in Der_{\CC}(R)$, $T(g_m)\in (f^{m-1})$. This implies that around each smooth point of $P\in D$, using a regular system of parameters, we have an $h$ such that $h(P)\neq 0,$ and $g_m - g_m(P) \in f^m\cdot R_h$. Restricting the automorphism to the open set defined by $h$, we see that $\varphi_h$ acts by multiplying by a constant. This means that this constant doesn't depend on $m$, and after restricting to double intersections, we see that this constant doesn't depend on the point. Let $\lambda$ be the constant. Since $\varphi - \lambda\cdot Id$ is 0 on all the smooth points, $\varphi = \lambda\cdot Id$ everywhere. In particular, $\varphi$ preserves the Hodge and the weight filtration.
\end{proof}

We are very grateful to Mircea Musta\c{t}\u{a} for suggesting the argument of \lemmaref{lemmaautomorphism}.

\begin{remark}\label{remarkreferee} A simpler proof of the Lemma was suggested by a referee. Using the Riemann-Hilbert correspondence, and Verdier duality, it is enough to verify the conclusion of the Lemma on the perverse sheaf $\QQ_D[n-1]$. We leave the original proof to have an argument using only $\Dmod$-modules.
\end{remark}

A consequence of the result above, is that we can write a description of the weighted Hodge ideals in a similar way to (\ref{hivfiltration}). Let $W_lV^1i_+\OX$ be the submodule of $V^1i_+\OX$ which maps to $W_{n+l-2}\gr^1_Vi_+\OX$ via the canonical projection. 

\begin{proposition}\label{whivfiltration} Using the notation above, $$I^{W_l}_p(D) = \left\{ \sum_{j=0}^p Q_j(1)f^{p-j}v_j \ : \  \sum_{j=0}^p v_j\partial_t^j\delta\in W_lV^1i_+\OX\right\}.$$ \end{proposition}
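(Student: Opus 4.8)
The plan is to combine the description of the Hodge ideals via the $V$-filtration, given in~(\ref{hivfiltration}) and~(\ref{hodgevfiltration}), with the compatibility of the map $\bar\tau$ with the weight filtration established in \propositionref{grvtau}. Recall that by definition $I^{W_l}_p(D)\otimes\OX((p+1)D) = F_pW_{n+l}\OX(*D)$, and that $F_p\OX(*D) = I_p(D)\otimes\OX((p+1)D) = \tau(F_{p+1}V^1 i_+\OX)$. So the statement amounts to identifying $F_pW_{n+l}\OX(*D)$ with $\tau\bigl(F_{p+1}(W_lV^1 i_+\OX)\bigr)$, where $W_lV^1 i_+\OX$ is the preimage of $W_{n+l-2}\gr^1_V i_+\OX$; once that is shown, unwinding the formula for $\tau$ on an element $v=\sum_{j=0}^p v_j\partial_t^j\delta$ gives exactly $\sum_{j=0}^p Q_j(1)f^{p-j}v_j$ after clearing the common factor $\OX((p+1)D)$, since $\tau(v)=\sum Q_j(1) v_j/f^{j+1}$ and multiplying by $f^{p+1}$ yields the displayed expression.

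First I would pass to the quotient $\OX(*D)/\OX$ and work with the induced map $\bar\tau\colon \gr^1_V i_+\OX\to\OX(*D)/\OX$. The key point is that $\bar\tau$ underlies the morphism of mixed Hodge modules $\tau^H\colon\psi_{f,1}\OX(-1)\to\cohH^1_D(\OX)$ by \propositionref{grvtau}, hence is strictly compatible with both the Hodge and the weight filtrations. The weight filtration on $\gr^1_V i_+\OX=\psi_{f,1}\OX$ is the monodromy weight filtration (centered appropriately, which accounts for the index shift $W_{n+l-2}$ vs.\ $W_{n+l}$), and strictness of a morphism of MHM gives $\bar\tau(W_{n+l-2}\gr^1_V i_+\OX) = W_{n+l}(\OX(*D)/\OX) = W_{n+l}\OX(*D)/\OX$ (using that $\OX=W_n$ sits in the kernel, or rather that the weight filtration on the quotient is the quotient filtration). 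Taking Hodge pieces and using strictness for $F$ as well, I get $\bar\tau\bigl(F_p W_{n+l-2}\gr^1_V i_+\OX\bigr) = F_p W_{n+l}\OX(*D)/\OX$.

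Next I would lift this back to $V^1 i_+\OX$. By definition $W_lV^1 i_+\OX$ is the preimage of $W_{n+l-2}\gr^1_V i_+\OX$ under the projection $V^1\to\gr^1_V$, and the Hodge filtration on $V^1 i_+\OX$ restricts compatibly, so $F_{p+1}(W_lV^1 i_+\OX)$ maps onto $F_p W_{n+l-2}\gr^1_V i_+\OX$ (here I use the standard fact, already invoked in~(\ref{hodgevfiltration}), that $\tau$ relates $F_{p+1}$ on the $V$-filtration side to $F_p$ on $\OX(*D)$, i.e.\ the shift by one in the Hodge index). Since $\tau(V^{>1} i_+\OX)\subseteq\OX$ as computed in the excerpt, the image $\tau\bigl(F_{p+1}(W_lV^1 i_+\OX)\bigr)$ already contains $F_p W_{n}\OX(*D)$-type contributions from the kernel, and modding out by $\OX$ we recover exactly $\bar\tau\bigl(F_p W_{n+l-2}\gr^1_V i_+\OX\bigr) + \OX/\OX$; combined with $\tau(F_{p+1}V^1i_+\OX)=F_p\OX(*D)\supseteq\OX$ this pins down $\tau\bigl(F_{p+1}(W_lV^1 i_+\OX)\bigr) = F_p W_{n+l}\OX(*D)$ on the nose. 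Translating through $F_pW_{n+l}\OX(*D) = I^{W_l}_p(D)\otimes\OX((p+1)D)$ and writing $\tau\bigl(\sum v_j\partial_t^j\delta\bigr)=\sum Q_j(1)v_j/f^{j+1}$, multiplication by $f^{p+1}$ gives the stated generating set.

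The main obstacle I anticipate is the careful bookkeeping of the two index shifts — the Hodge-index shift between $F_\bullet V^1 i_+\OX$ and $F_\bullet\OX(*D)$ built into the definition of $\tau$, and the weight-index shift in $W_lV^1 i_+\OX$ being defined via $W_{n+l-2}\gr^1_V$ — together with checking that restricting the Hodge filtration to the submodule $W_lV^1 i_+\OX$ and then applying $\tau$ really is strict, i.e.\ that no additional Hodge pieces are lost or gained when intersecting with the weight-filtration preimage. This is where one must use that $\tau^H$ is a genuine morphism of mixed Hodge modules (so bistrict for $F$ and $W$ simultaneously) rather than merely a filtered map; the content of \propositionref{grvtau} is precisely what makes this legitimate, so the remaining work is to assemble these strictness statements without sign or indexing errors.
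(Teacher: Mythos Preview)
Your proposal is correct and follows the same route as the paper: the paper's proof is the single line ``It follows from \propositionref{grvtau} that $\tau(F_{p+1}W_lV^1i_+\OX)=F_pW_{n+l}\OX(*D)= I_p^{W_l}(D)\otimes\OX((p+1)D)$,'' and you have supplied the details behind that sentence --- factoring through $\bar\tau$, using bistrictness of the underlying MHM morphism from \propositionref{grvtau}, and then lifting back from $\OX(*D)/\OX$ using $\OX\subseteq\tau(F_{p+1}V^{>1})\subseteq\tau(F_{p+1}W_lV^1)$. One small imprecision: \propositionref{grvtau} does not assert that $\bar\tau$ itself underlies $\tau^H$, only that it has the same images on Hodge and weight pieces as the map $\tau_{\Dmod_X}$ that does (they may differ by a nonzero scalar, by \lemmaref{lemmaautomorphism}); this does not affect your argument since you only use the images.
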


\begin{proof} It follows from \propositionref{grvtau} that $\tau(F_{p+1}W_lV^1i_+\OX)=F_pW_{n+l}\OX(*D)= I_p^{W_l}(D)\otimes\OX((p+1)D).$ \end{proof}

The result above can be simplified even more using the description of the weight filtration of $\psi_{f,1}\OX$, and that is the statement of \theoremref{whivfiltration2}.

\begin{proof}[Proof of \theoremref{whivfiltration2}] First, we note that if $v\in V^1i_+\OX$, then $\tau(t\partial_t v) = 0$. Indeed, let $v=\sum_{j=0}^p v_j\partial_t^j\delta\in V^{1}i_+\OX$, then $$t\partial_t v = \sum_{j=0}^p (fv_j\partial_t^{j+1}\delta - (j+1)v_j\partial_t^j\delta),$$ and $$\tau(t\partial_t v) = \sum_{j=0}^p\left(Q_{j+1}(1) \frac{fv_j}{f^{j+2}} - (j+1)Q_j(1)\frac{v_j}{f^{j+1}}\right) = 0.$$ The weight filtration of $\psi_{f,1}\OX$ admits the following description for $k\geq 0$: $$W_{n-1+k}\psi_{f,1}\OX = \sum_{m\geq 0} (t\partial_t)^m(\ker{(t\partial_t)^{2m+k+1}})$$ (see \cite{saito94}*{2.7} and for the monodromy filtration see e.g. \cite{variationsofmhsi}*{Remark 2.3}).  The only piece that is not an image of $(t\partial_t)$ is $\ker{(t\partial_t)^{k+1}}$. That means that the subset $\ker{(t\partial_t)^{l}}\subseteq W_lV^1i_+\OX$ has the same image as $W_lV^1i_+\OX$ via $\tau$.
\end{proof}

\begin{remark}\label{qhomog} Let $(X,D)$ be a pair such that $D$ has at most isolated weighted homogeneous singularities. \theoremref{whivfiltration2} gives a complete description of the weighted Hodge ideals using the description of the $V$-filtration in \cite{saito09}. Using the notation above, in this case, $(t\partial_t)^2u\in V^{>1}i_+\OX$ for all $u\in V^1i_+\OX$. For this reason, $I_p^{W_2}(D) = I_p(D)$, for all $p\geq 0$. An argument without the use of the $V$-filtration in the case of $p=0$ is described in \cite{olano22a}*{\textsection 10}.
\end{remark}

A direct application of \theoremref{whivfiltration2} is that we can recover the following result proved in \cite{hodgeideals}*{Theorem C}. The proof we give differs from the one in \cite{hodgeideals} and is also much shorter. 

\begin{corollary}\label{corollaryadjoint} Let $X$ be a smooth variety and $D$ an effective reduced divisor. Then $$I_p(D) \subseteq \adj(D)$$ for all $p\geq 1$. \end{corollary}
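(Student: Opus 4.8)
The plan is to identify the adjoint ideal $\adj(D)$ with the $\ell=1$ weighted Hodge ideal $I_0^{W_1}(D)$, which is stated in the introduction as $I_0^{W_1}(D) = \adj(D)$ (see \cite{olano22a}*{Theorem A}), and then deduce $I_p(D) = I_p^{W_n}(D) \subseteq I_p^{W_1}(D)$ together with $I_p^{W_1}(D) \subseteq I_0^{W_1}(D)$ from the characterization in \theoremref{whivfiltration2}. More precisely, the proof reduces to two containments: first the chain of inclusions (\ref{chain}) already gives $I_p(D) = I_p^{W_n}(D)$ on one end, but we want to compare with the $\ell=1$ ideal, so the point is really to show $I_p^{W_1}(D)\subseteq I_0^{W_1}(D)$ for all $p\geq 1$, i.e.\ that raising $p$ does not take us outside the adjoint ideal at level $\ell=1$.

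First I would set up the $V$-filtration description. By \theoremref{whivfiltration2}, an element of $I_p^{W_1}(D)$ has the form $\sum_{j=0}^p Q_j(1)f^{p-j}v_j$ where $v = \sum_{j=0}^p v_j\partial_t^j\delta \in V^1 i_+\OX$ satisfies $t\partial_t\, v \in V^{>1}i_+\OX$ (using the case $\ell=1$, where $(t\partial_t)^1 v$ must land in $V^{>1}$). The key structural fact is that the condition $t\partial_t\, v \in V^{>1}i_+\OX$ is exactly the condition defining $W_1 V^1 i_+\OX$, i.e.\ $v$ maps into $W_{n-1}\gr^1_V i_+\OX = \ker(t\partial_t)$ on $\psi_{f,1}\OX$ (the lowest weight piece of the nearby cycles, which is the part killed by the monodromy logarithm). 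I would then observe that for such $v$, one can extract the ``$j=0$ part'': writing $v_0 = \tau(\text{truncation})$ appropriately, the image under $\tau$ lands in $\OX(D)\otimes I_0^{W_1}(D) = F_0 W_{n+1}\OX(*D)$ after dividing by the pole order, which is the adjoint ideal.

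The cleanest route, following the spirit of \propositionref{whivfiltration}, is: take $v\in W_1 V^1 i_+\OX$ realizing an element of $I_p^{W_1}(D)$. Apply $\partial_t^p$ (or rather, argue via the $\Dmod_X$-module structure and the fact that $\partial_t$ raises the $V$-index filtration-degree); this has the effect of ``lowering $p$ to $0$''. Concretely, $\partial_t$ acts on $i_+\OX$ and on the Hodge filtration, and $\partial_t \cdot F_{p+1}W_1 V^1 \subseteq F_{p+2}W_1 V^1$ while the corresponding operation on $\OX(*D)$ multiplies a pole-order-$(p+1)$ element to pole order $p+2$ with an extra factor of $f$, keeping us inside $I_p^{W_1}(D)$; going the other direction, if a generator survives at level $p$ it already appeared at level $0$ up to an element of the structure sheaf. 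I would make this precise by showing $f\cdot I_p^{W_1}(D) \subseteq$ (something) and using strictness, or more simply by direct manipulation of the formula $\sum Q_j(1)f^{p-j}v_j$: since $t\partial_t v \in V^{>1}$, one has a relation letting each $v_j$ for $j\geq 1$ be expressed via $v_0$ modulo $f$-multiples, so that modulo $f^{p+1}$ the whole sum is congruent to $f^p v_0$ with $v_0$ (the image of $v$) constrained to lie in $\tau^{-1}(F_0 W_{n+1})$, i.e.\ $v_0\in f\cdot\adj(D)$ up to units.

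The main obstacle will be making the ``descent in $p$'' argument rigorous: one must carefully track how the condition $t\partial_t v\in V^{>1}i_+\OX$ interacts with the expression $\sum_{j=0}^p Q_j(1)f^{p-j}v_j$ and show the sum lies in $f^p\cdot\adj(D)\otimes\OX(D) = \adj(D)\otimes\OX((p+1)D)\cdot f^{-1}\cdot\OX$... precisely, that the ideal generated is contained in $\adj(D)$. I expect the honest proof to run: use the recursion $t u\partial_t^j\delta = fu\partial_t^j\delta - j u\partial_t^{j-1}\delta$ (as in \parref{} the paragraph before \propositionref{whivfiltration}) applied to the components of $t\partial_t v$, which forces $f v_j \equiv (j+1) v_{j-1} \pmod{f^{p+1} + (\text{higher }V)}$ up to the $V^{>1}$ correction terms that $\tau$ sends into $\OX$; telescoping gives $v_j \equiv \frac{1}{j!}\,(\text{derivative-type operator})\, v_0$ and, after multiplying through by $Q_j(1) = j!$, the sum collapses to $f^p v_0 \cdot(\text{unit})$ modulo $\OX\cdot\OX((p+1)D)$. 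Since $v\in W_1 V^1$ means precisely $v_0 \in$ the generators of $F_0 W_{n+1}\OX(*D)\cdot f = \adj(D)\otimes\OX(D)\cdot f$, conclude $I_p^{W_1}(D)\subseteq I_0^{W_1}(D) = \adj(D)$, and combine with $I_p(D) = I_p^{W_n}(D)$ and the chain (\ref{chain})... wait, the chain goes the wrong way ($I_p^{W_1}\subseteq I_p^{W_n}$), so instead conclude directly from $\theoremref{whivfiltration2}$ that $I_p(D)\subseteq I_p^{W_1}(D)$ fails in general; rather the correct final step is that $I_p(D)\subseteq\adj(D)$ follows because \emph{every} generator of $I_p(D)$, which corresponds to $v\in V^1 i_+\OX$ with no weight constraint, in fact can be chosen so that $(t\partial_t)v\in V^{>1}$ after noting $\tau(t\partial_t v) = 0$ (proved in the proof of \theoremref{whivfiltration2}), hence the generator is unchanged by passing to $W_1$, giving $I_p(D)\subseteq I_p^{W_1}(D)\subseteq I_0^{W_1}(D) = \adj(D)$.
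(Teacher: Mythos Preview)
Your final argument has a genuine gap. You claim that since $\tau(t\partial_t v) = 0$ for every $v \in V^1 i_+\OX$, one may choose the lift $v$ of a generator of $I_p(D)$ so that $(t\partial_t)v \in V^{>1} i_+\OX$, and hence $I_p(D) \subseteq I_p^{W_1}(D)$. This is a non sequitur: the identity $\tau(t\partial_t v) = 0$ only says $\im(t\partial_t\restr{V^1}) \subseteq \ker\tau$; it gives no mechanism for moving $v$ inside its $\tau$-fiber, while preserving the Hodge degree, into the locus where $t\partial_t v \in V^{>1}$. Indeed the inclusion $I_p(D) \subseteq I_p^{W_1}(D)$ is simply false in general: the chain (\ref{chain}) runs $I_p^{W_1}(D) \subseteq \cdots \subseteq I_p^{W_n}(D) = I_p(D)$, and the inclusion is strict for instance whenever $\widetilde{\alpha_D} = p+1$, where $I_p(D) = \OX$ but $I_p^{W_1}(D) \subsetneq \OX$ by \corollaryref{whiminimalexp}. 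Your earlier ``telescoping'' sketch of $I_p^{W_1}(D) \subseteq I_0^{W_1}(D)$ is also left vague, but even made precise it could not rescue the argument once the first inclusion fails.

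The paper's route avoids this trap entirely. It first reduces to $p = 1$ using the known inclusion $I_p(D) \subseteq I_1(D)$ from \cite{hodgeideals}*{Proposition 13.1}. Then, given $u = u_0 f + u_1 \in I_1(D)$ witnessed by $v = u_0\delta + u_1\partial_t\delta \in V^1 i_+\OX$, it does \emph{not} try to push $v$ itself into $W_1 V^1$; instead it writes down a \emph{different} element $v' = u_0 f\delta + u_1\delta$, at Hodge level one rather than two, whose image under the formula of \propositionref{whivfiltration} is again $u$, and checks directly that $v' \in W_1 V^1 i_+\OX$: one has $(f-t)v = u_1\delta \in V^1$, then $t\partial_t(u_1\delta) = t(u_1\partial_t\delta) \in tV^{>0} \subseteq V^{>1}$ since $u_1\partial_t\delta = v - u_0\delta \in V^{>0}$, and separately $u_0 f\delta = t(u_0\delta) \in V^{>1}$ because $\delta \in V^{>0}$. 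The crucial move you are missing is this change of Hodge level in the lift.
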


\begin{proof} Recall that $\adj(D) = I_0^{W_1}(D)$ \cite{olano22a}*{Theorem A}. Moreover, as $I_p(D) \subseteq I_1(D)$ \cite{hodgeideals}*{Proposition 13.1}, it is enough to prove that $I_1(D)\subseteq I_0^{W_1}(D).$ \\

Let $u\in I_1(D)$. By (\ref{hivfiltration}), $u = u_0f + u_1$, where $f$ is defining equation of $D$, and $u_0\delta + u_1\partial_t\delta \in V^1i_+\OX$. We also have that $$V^1i_+\OX \ni (f-t)(u_0\delta + u_1\partial_t\delta) = u_1\delta,$$ and $$\partial_t ( u_1\delta) = u_1\partial_t\delta + u_0\delta - u_0\delta \in V^{>0}i_+\OX.$$ Finally, as $\delta\in V^{>0}i_+\OX$, then $u_0f\delta = t(u_0\delta)\in V^{>1}i_+\OX\subseteq W_1V^1i_+\OX$. This means that $u_0f\delta + u_1\delta \in W_1V^1i_+\OX$, hence $u_0f +u_1 \in I_0^{W_1}(D).$
\end{proof}

There is a relation between the minimal exponent of $f$ and the weighted Hodge ideals. Recall that if we denote $b_f(s)$ the Bernstein-Sato polynomial, and $\widetilde{b}_f(s)$ the reduced one, we call $\widetilde{\alpha_f}$ the negative of the largest root of $\widetilde{b}_f(s)$. Saito proved in \cite{saito16} that $I_p(D) = \OX$ if and only if $\widetilde{\alpha_f} \geq p+1$ (c.f. \cite{mustatapopa20b}*{Corollary 6.1}). Moreover, this result also holds in the case of $\QQ$-divisors, and it can be stated in the following form.

\begin{lemma}[\cite{mustatapopa20}*{Lemma 1.2}]\label{lemma1.2} For an integer $p$ and $\alpha\in (0,1]$, $$\partial_t^p\delta\in V^{\alpha}i_+\OX \Leftrightarrow \widetilde{\alpha_f} \geq p+\alpha.$$ 
\end{lemma}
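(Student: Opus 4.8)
The plan is to reduce the equivalence, via the Kashiwara--Malgrange theory of the $V$-filtration, to the description of the minimal exponent as a $V$-order, using the microlocalization in order to make $\del_t$ an invertible operator with no exceptional degree on the associated graded. Set $\widetilde{M}:=i_+\OX\otimes_{\CC[\del_t]}\CC[\del_t,\del_t^{-1}]$, equipped with its microlocal $V$-filtration $\widetilde V^{\bullet}$. The three inputs are: (i) $\widetilde V^{\alpha}\widetilde{M}\cap i_+\OX=V^{\alpha}i_+\OX$ for every $\alpha\leq 1$; (ii) $\del_t\colon\widetilde V^{\alpha}\widetilde{M}\to\widetilde V^{\alpha-1}\widetilde{M}$ is an isomorphism for every $\alpha$, so that $\del_t^{p}$ induces an isomorphism $\widetilde V^{\alpha+p}\widetilde{M}\cong\widetilde V^{\alpha}\widetilde{M}$; (iii) $\delta\in\widetilde V^{\alpha}\widetilde{M}$ if and only if $\widetilde{\alpha_f}\geq\alpha$, i.e. the microlocal $V$-order of $\delta$ equals $\widetilde{\alpha_f}$. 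Granting these, for $\alpha\in(0,1]$ and $p\geq 0$ one gets the chain
\[
\del_t^{p}\delta\in V^{\alpha}i_+\OX \ \overset{\text{(i)}}{\Longleftrightarrow}\ \del_t^{p}\delta\in\widetilde V^{\alpha}\widetilde{M} \ \overset{\text{(ii)}}{\Longleftrightarrow}\ \delta\in\widetilde V^{\alpha+p}\widetilde{M} \ \overset{\text{(iii)}}{\Longleftrightarrow}\ \widetilde{\alpha_f}\geq\alpha+p,
\]
the first step being legitimate because $\del_t^{p}\delta$ already lies in $i_+\OX$ and $\alpha\leq 1$. This is the assertion.

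It is worth recording the two ends of this chain concretely. For $\alpha=1$ the equivalence is tied to Saito's theorem through the identity $F_p\OX(*D)=\tau(F_{p+1}V^1i_+\OX)$ recalled above, together with $\tau(\del_t^{p}\delta)=Q_p(1)f^{-p-1}$: membership $\del_t^{p}\delta\in V^1i_+\OX$ forces $I_p(D)=\OX$ and hence $\widetilde{\alpha_f}\geq p+1$, and conversely $\widetilde{\alpha_f}\geq p+1$ gives $I_p(D)=\OX$ by Saito's theorem, from which one recovers $\del_t^{p}\delta\in V^1i_+\OX$ (by downward induction on $p$). For $\alpha\in(0,1)$ the number $\alpha+p$ is never an integer, so the equivalence $\del_t^{p}\delta\in V^{\alpha}i_+\OX\Leftrightarrow\delta\in V^{\alpha+p}i_+\OX$ could in principle be obtained inside $i_+\OX$ itself from the fact that $\del_t\colon\gr^{\beta}_V\to\gr^{\beta-1}_V$ is an isomorphism for $\beta\neq 1$ --- each $\del_t$ then lowers the $V$-order by exactly one --- provided no $\del_t^{j}\delta$ with $0\leq j<p$ has $V$-order precisely $1$ along the way, and it is exactly this proviso, together with input (iii), that the microlocalization disposes of.

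The crux is establishing (i) and (iii). Fact (iii) is Saito's characterization of the minimal exponent in terms of the microlocal $V$-filtration (equivalently, of the microlocal $b$-function of $f$); this is precisely where the hypothesis that $D$ is reduced enters, guaranteeing that the factor $(s+1)$ of $b_f$ --- which governs the step at $V$-order $1$ and whose removal defines $\widetilde{b}_f$ --- does not obstruct the argument. Fact (i), the agreement of the ordinary and microlocal $V$-filtrations in degrees $\leq 1$, is the reason the statement must be restricted to $\alpha\in(0,1]$: for $\alpha>1$ the two filtrations differ and the equivalence genuinely breaks, as one sees already when $D$ is smooth, where $\widetilde{\alpha_f}=\infty\geq p+\alpha$ always, whereas $\del_t^{p}\delta$ has $V$-order $1$ and so lies in no $V^{\alpha}i_+\OX$ with $\alpha>1$.
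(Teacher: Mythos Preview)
The paper does not provide its own proof of this lemma; it is quoted verbatim as \cite{mustatapopa20}*{Lemma 1.2} and used as a black box. Your argument via the microlocal $V$-filtration is correct and is in fact the approach taken in the cited reference, which in turn rests on Saito's characterization of $\widetilde{\alpha_f}$ as the microlocal $V$-order of $\delta$ (\cite{saito94}). The three inputs you isolate are exactly the right ones, and the chain of equivalences is valid; the restriction $\alpha\in(0,1]$ is needed precisely so that step (i) applies, as you correctly note in your last paragraph. The middle paragraph, while accurate as commentary, is not needed for the proof and could be dropped without loss.
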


Using these ideas, we obtain the following result for the 1st weighted Hodge ideals.

\begin{corollary}\label{whiminimalexp} Using the notation above, $$I_p^{W_1}(D) = \OX \text{ if and only if } \widetilde{\alpha_f} > p+1.$$
\end{corollary}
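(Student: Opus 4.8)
The plan is to use the description of the first weighted Hodge ideal coming from \propositionref{whivfiltration} together with \lemmaref{lemma1.2}, exactly as one would prove the analogous statement $I_p(D) = \OX \iff \widetilde{\alpha_f} \geq p+1$ from \eqref{hivfiltration}. Recall that by \propositionref{whivfiltration} one has $I_p^{W_1}(D) = \OX$ if and only if $1 \in I_p^{W_1}(D)$, i.e.\ there exists $v = \sum_{j=0}^p v_j\partial_t^j\delta \in W_1V^1i_+\OX$ with $\sum_{j=0}^p Q_j(1)f^{p-j}v_j = 1$. By \theoremref{whivfiltration2} we may take $v \in \ker(t\partial_t)^1 = \ker(t\partial_t)$, that is $t\partial_t v = 0$, with $v \in V^1i_+\OX$ and $\sum_{j=0}^p Q_j(1)f^{p-j}v_j = 1$.

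First I would analyze the condition $t\partial_t v = 0$ explicitly. Writing $v = \sum_{j=0}^p v_j\partial_t^j\delta$ and using $t\partial_t^{j+1}\delta = f\partial_t^{j+1}\delta$ modulo lower-order terms — more precisely the identities $t\delta = f\delta$ and $t\partial_t^{j}\delta = f\partial_t^j\delta - j\partial_t^{j-1}\delta$ — one finds that $t\partial_t v = 0$ forces the $v_j$ to be essentially rigid: the natural solution is $v_j = c_j f^{?}$ up to the combinatorics. In fact the cleanest route is: the element $\partial_t^p\delta$ satisfies $t\partial_t(\partial_t^p\delta) = (p+1)\partial_t^p\delta + t\partial_t^{p+1}\delta$, and there is a canonical Jordan-block structure, so the unique (up to scalar) element of $\ker(t\partial_t)$ with top coefficient $v_p$ a unit is proportional to the image of $\partial_t^p\delta$ under an appropriate operator; tracing through, the resulting combination $\sum Q_j(1)f^{p-j}v_j$ is a unit precisely when $v_p$ is a unit, and then it is harmless to normalize $v = \partial_t^p\delta$ after multiplying by a unit. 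Thus $1 \in I_p^{W_1}(D)$ is equivalent to $\partial_t^p\delta \in W_1V^1i_+\OX$, i.e.\ $\partial_t^p\delta \in V^1i_+\OX$ and $t\partial_t \partial_t^p\delta \in V^{>1}i_+\OX$.

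Next I would translate these two membership conditions via \lemmaref{lemma1.2}. The condition $\partial_t^p\delta \in V^1 i_+\OX$ is, by \lemmaref{lemma1.2} with $\alpha = 1$, equivalent to $\widetilde{\alpha_f} \geq p+1$. For the second condition, compute $t\partial_t(\partial_t^p\delta)$: using $t\partial_t^{p+1}\delta = f\partial_t^{p+1}\delta - (p+1)\partial_t^p\delta$ one gets $t\partial_t(\partial_t^p\delta) = \partial_t t \partial_t^p\delta = \partial_t(f\partial_t^p\delta - p\partial_t^{p-1}\delta) = f\partial_t^{p+1}\delta + f'\partial_t^p\delta\cdot(\text{lower order})$; the point is that modulo $V^{>1}$ its leading behavior is governed by $\partial_t^{p+1}\delta$, so $t\partial_t(\partial_t^p\delta) \in V^{>1}i_+\OX$ holds if and only if $\partial_t^{p+1}\delta \in V^{>1}i_+\OX$, which by \lemmaref{lemma1.2} (taking $\alpha$ slightly bigger than $1$, or rather using that $V^{>1} = \bigcup_{\beta>0} V^{1+\beta}$ and the standard fact $\partial_t^{p+1}\delta \in V^{>1} \iff \widetilde{\alpha_f} > p+1$) is equivalent to $\widetilde{\alpha_f} > p+1$. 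Since $\widetilde{\alpha_f} > p+1$ already implies $\widetilde{\alpha_f} \geq p+1$, the conjunction of the two conditions is simply $\widetilde{\alpha_f} > p+1$, which gives the claim.

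The main obstacle I anticipate is the bookkeeping in the first step: verifying carefully that a general $v \in \ker(t\partial_t) \cap V^1i_+\OX$ produces a unit value of $\sum_{j} Q_j(1)f^{p-j}v_j$ \emph{only when} the associated data forces $\partial_t^p\delta \in V^1 i_+\OX$ — i.e.\ ruling out exotic solutions of $t\partial_t v = 0$ that might make $1 \in I_p^{W_1}(D)$ under weaker hypotheses. The clean way to handle this is to observe that $\tau$ intertwines $t\partial_t$ with multiplication by $0$ (shown in the proof of \theoremref{whivfiltration2}, where $\tau(t\partial_t v) = 0$), and that the weight-one condition $(t\partial_t)v \in V^{>1}$ together with $v \in V^1$ pins down, via \lemmaref{lemma1.2} applied to the $V$-order of the top coefficient, exactly the exponent $\widetilde{\alpha_f} > p+1$; alternatively one can cite the known reduced-divisor computation for $I_p(D)$ and for $\adj$-type ideals and note the extra $t\partial_t$-condition upgrades $\geq$ to $>$. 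I would also double-check the edge case $p = 0$ against the known statement that $I_0^{W_1}(D) = \adj(D) = \OX \iff D$ has rational singularities $\iff \widetilde{\alpha_f} > 1$, which is consistent.
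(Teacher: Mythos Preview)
Your overall strategy is the same as the paper's: reduce $I_p^{W_1}(D)=\OX$ to the single membership $\partial_t^p\delta \in W_1V^1i_+\OX$ (equivalently $\partial_t^p\delta\in V^1$ and $t\partial_t^{p+1}\delta\in V^{>1}$), and then translate via \lemmaref{lemma1.2} together with the injectivity of $t$ on $\gr_V^0$. The forward direction is fine. The gap is in your justification of the reduction step for the reverse implication.

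Your claim that one can ``normalize $v=\partial_t^p\delta$'' because there is a ``unique (up to scalar) element of $\ker(t\partial_t)$ with top coefficient $v_p$ a unit'' does not hold up. Read literally in $i_+\OX$, the equation $t\partial_t v=0$ has no nonzero solution with finitely many $\partial_t^j\delta$ terms; read in $\gr_V^1$ (which is what \theoremref{whivfiltration2} actually says: $(t\partial_t)v\in V^{>1}$), the kernel is typically large and there is no uniqueness to invoke. The clean way to carry out the reduction---and this is what the paper does---is to first observe directly that $\partial_t^j\delta\in W_1V^1$ for every $j<p$: since $I_p(D)=\OX$ we have $\partial_t^{j+1}\delta\in V^1$, hence $(t\partial_t)\partial_t^j\delta=t\partial_t^{j+1}\delta\in V^2\subseteq V^{>1}$. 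Then, given a witness $v=\sum_j v_j\partial_t^j\delta\in W_1V^1$ with $\sum_j Q_j(1)f^{p-j}v_j=1$, use $Q_p(1)v_p=1-f\cdot(\text{lower terms})$ together with $f\partial_t^p\delta=t\partial_t^p\delta+p\partial_t^{p-1}\delta\in W_1V^1$ to solve for $\partial_t^p\delta$ inside $W_1V^1$. The paper organizes exactly this as an induction on $p$.

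Two smaller corrections to your third paragraph. First, $t\partial_t(\partial_t^p\delta)$ is simply $t\partial_t^{p+1}\delta$; your detour through $\partial_t t$ is off by the commutator $[t,\partial_t]=-1$, and the stray ``$f'$'' has no meaning here. Second, $t\partial_t^{p+1}\delta\in V^{>1}$ is equivalent to $\partial_t^{p+1}\delta\in V^{>0}$, not $V^{>1}$, and this equivalence uses the injectivity of $t:\gr_V^0i_+\OX\to\gr_V^1i_+\OX$; from $\partial_t^{p+1}\delta\in V^{\alpha}$ for some $\alpha\in(0,1]$ one then concludes $\widetilde{\alpha_f}\geq p+1+\alpha>p+1$ via \lemmaref{lemma1.2}.
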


\begin{proof} Suppose first that $\widetilde{\alpha_f} > p+1$. Then, by \lemmaref{lemma1.2}, $\partial_t^p\delta\in V^1i_+\OX$. Moreover, there exists $\alpha\in (0,1]$ such that $\widetilde{\alpha_f} \geq p+1+\alpha$. Again, by \lemmaref{lemma1.2}, $\partial_t^{p+1}\delta\in W_1V^1i_+\OX$, and therefore, $I_p^{W_1}(D) = \OX$.\\

Suppose now that $I_p^{W_1}(D) = \OX$. Then, $I_p(D) = \OX$, and in particular $\delta, \partial_t\delta, \ldots, \partial_t^p\delta\in V^1i_+\OX$. Moreover, there exists $v = \sum_{j=0}^p v_j\partial_t^j\delta\in W_1V^1i_+\OX$ such that $\sum_{j=0}^p Q_j(1)f^{p-j}v_j = 1$. It is enough to show that $\partial_t^p\delta\in W_1V^1i_+\OX$. Indeed, by \propositionref{whivfiltration} and the injectivity of $t: \gr_V^0i_+\OX \to \gr_V^1i_+\OX$ (see e.g. \cite{schnellsurvey}*{\textsection{11}}), this means that $\partial_t^{p+1}\delta \in V^{\alpha}i_+\OX$ with $\alpha\in (0,1]$, and therefore, $\widetilde{\alpha_f} \geq p+1+\alpha > p+1$. We argue by induction. Suppose $p=0$. Then $v=v_0\delta$ and by the second condition, $v_0 = 1$. Hence, $\delta\in W_1V^1i_+\OX$. By the induction hypothesis, we assume now that $\partial_t^k\delta\in W_1V^1i_+\OX$ for $k=0, \ldots , p-1$. It follows from the description of $v$ that $$Q_p(1)v_p = 1 - f(\sum_{j=0}^{p-1} Q_j(1)f^{p-1-j}v_j),$$ and then $$ v = \partial_t^p\delta - f(\sum_{j=0}^{p-1} Q_j(1)f^{p-1-j}v_j)\partial_t^p\delta + \sum_{j=0}^{p-1} v_j\partial_t^j\delta .$$ The result follows if we show that $f\partial_t^p\delta\in W_1V^1i_+\OX$, and this is a consequence of $f\partial_t^p\delta = t\partial_t(\partial_t^{p-1}\delta) + p\partial_t^{p-1}\delta \in W_1V^1i_+\OX$. 

\end{proof}

\begin{remark} In general, we cannot obtain more information about the other $p$-weighted Hodge ideals. In \cite{olano22a}*{\textsection 13}, the case of isolated log-canonical singularities, that are not rational, is discussed. This case corresponds to $\widetilde{\alpha_f} = 1$. By the discussion above, it is clear that $I_0(D) = \OX$ and that $I_0^{W_1}(D)$ is not trivial. For $l = 2,\ldots, n-1$, there are examples of $f$ where the weighted multiplier ideals $I_0^{W_l}(D)$ are trivial, and other examples where they are non-trivial \cite{ishii85}*{Theorem 5.2}.
\end{remark}

\section{Local study} 

\subsection{Measuring the difference between weighted Hodge ideals.}\label{sectiondifference} There is a short exact sequence that arises from the definition of the weight filtration on $\OX(*D)$: $$0\to W_{n+l-1}\OX(*D) \to W_{n+l}\OX(*D) \to \gr^W_{n+l}\OX(*D) \to 0.$$ Applying $F_p$, we obtain the short exact sequence \begin{equation}\label{differencesequence} 0\to I_p^{W_{n+l-1}}(D) \otimes \OX((p+1)D) \to I_p^{W_{n+l}}(D) \otimes \OX((p+1)D) \to F_p \gr^W_{n+l}\OX(*D) \to 0.\end{equation} When $D$ has at most isolated singularities and $l\geq 2$, $\gr^W_{n+l}\OX(*D)$ is supported on the singular points. To simplify the notation, we use the following definition.

\begin{definition} Suppose $D$ has at most one isolated singularity $x\in D$, and let $i_x:\{x\}\into X$. For $l\geq 2$, we denote by $H_l$ the complex pure Hodge structure of weight $n+l$ such that $$\gr^W_{n+l}\OX(*D) \cong (i_{x})_+H_l.$$
\end{definition}

In order to describe the dimension of $F_p(i_{x})_+H_l$, it is enough to describe the dimension of $\Gr_F^{n-k}H_l$ for $0\leq k\leq p$. This is a consequence of the local description of the Hodge filtration of $(i_{x})_+H_l$. Let $x_1, \ldots, x_n$ be a set of coordinates around the point $x\in X$. We have the following description of the pushforward of $H_l$ as a $\Dmod$-module: \begin{equation} (i_{x})_+H_l = (i_{x})_*H_l \otimes_{\CC} \CC[\partial_1, \cdots, \partial_n], \end{equation} where $\partial_i = \frac{\partial}{\partial_{x_i}}$, and \begin{equation}\label{pointpushfwd} F_p(i_{x})_+H_l = \bigoplus_{\nu\in\ZZ_{\geq 0}^n} (i_{x})_*F_{p-|\nu|-n}H_l \otimes \partial^{\nu}, \end{equation} where $\partial^{\nu} = \partial_1^{\nu_1}\cdots\partial_n^{\nu_n}$, $|\nu| = \nu_1+\ldots +\nu_n$, and $F_kH_l = F^{-k}H_l$. Since the lowest degree of the Hodge filtration of $\OX(*D)$ is 0, and $\DR((i_{x})_+H_l) \cong (i_{x})_*H_l$, that is, the push-forward of the pure Hodge structure $H_l$ is a skyscraper sheaf, then the highest degree of the Hodge filtration of $H_l$ is $n$, in other words, $F^{n+1}H_l = 0$. Using this, we obtain, for instance, that $$F_0(i_{x})_+H_l = (i_{x})_*F^nH_l\otimes 1 = (i_{x})_*\Gr_F^nH_l\otimes 1,$$ and $$F_1(i_{x})_+H_l = (i_{x})_*F^{n-1}H_l\otimes 1 \oplus \bigoplus_i (i_{x})_*F^nH_l\otimes \partial_i.$$ Since $F_p(i_{x})_+H_l$ is a skyscraper sheaf, we denote by $\dim(F_p(i_{x})_+H_l)$ the dimension of the complex vector space $J_p$ that satisfies $F_p(i_{x})_+H_l = (i_{x})_*J_p$. From the discussion above, we obtain that  $$\dim(F_0(i_{x})_+H_l) = \dim(\Gr_F^nH_l),$$ $$\dim(F_1(i_{x})_+H_l) = \dim(F^{n-1}H_l) + n\dim(\Gr_F^nH_l) = \dim(\Gr_F^{n-1}H_l) + (n+1)\dim(\Gr_F^nH_l),$$ and in general \begin{equation}\label{dimensiondifference}\begin{split} \dim(F_p(i_{x})_+H_l) &= \sum_{k=0}^p \binom{n-1+k}{k}\dim(F^{n-p+k}H_l) = \sum_{r=0}^p\dim(\Gr_F^{n-r}H_l)\sum_{k=0}^{p-r}\binom{n-1+k}{k} \\ & = \sum_{r=0}^p\binom{n+p-r}{p-r}\dim(\Gr_F^{n-r}H_l). \end{split}\end{equation}

The dimension of $\Gr_F^{n-k}H_l$ is described in \theoremref{thmdifference}.

\begin{proof}[Proof of \theoremref{thmdifference}] We can and will assume that $X$ is a projective variety. Indeed, there is an open set around $x$ which has a smooth projective compactification $\bar{X}$. Let $\bar{D}$ be the closure of $D$ in $\bar{X}$. Consider a log-resolution of $(\bar{X}\smallsetminus x, \bar{D}\smallsetminus x)$ given by a sequence of blow ups with centers over the singular locus of $\bar{D}\smallsetminus x$. By blowing up the same sequence of centers over $\bar{X}$, we obtain a map $X_1\to \bar{X}$. Let $D_1$ be the strict transform of $\bar{D}$. By construction, the map is an isomorphism over $(X,D)$, and $D_1$ has only one isolated singularity corresponding to $x\in D$. We replace $(X,D)$ with $(X_1,D_1)$.\\

First, we prove that these dimensions do not depend on the log-resolution of singularities that is an isomorphism outside of $\{x\}$. Since for a pair of resolution of singularities one can find a third one that dominates the two of them, it is enough to show that the dimensions are equal if we have two resolutions of singularities $g_1:D_1\to D$ and $g_2:D_2\to D$ such that there is a morphism $h:D_1\to D_2$ such that $g_1 = g_2\circ h$. Let $G_i\subseteq D_i$ be the exceptional divisor of $g_i$. Consider the exact sequence of mixed Hodge structures $$\cdots \to H^{k-1}(G_1) \to H^k(D_2)\to H^k(D_1)\oplus H^k(G_2) \to H^k(G_1)\to \cdots$$ (see \cite{PS}*{Proof of Theorem 6.15}). For $l\geq 3$, applying $H^{p,n-l-p}$, we obtain that $$H^{p,n-l-p}(H^{n-2}(G_2)) \cong H^{p,n-l-p}(H^{n-2}(G_1)).$$ For $l=2$, applying $H^{p,n-p-2}$ and $H^{n-p-1,p+1}$, and noting that $h^{p,n-p-2}(D_i) = h^{n-p-1,p+1}(D_i)$, we obtain that $$h^{p,n-p-2}(H^{n-2}(G_1)) - h^{n-p-1,p+1}(H^n(G_1)) = h^{p,n-p-2}(H^{n-2}(G_2)) - h^{n-p-1,p+1}(H^n(G_2)).$$

Let $f:Y\to X$ be a log-resolution that is an isomorphism outside of $x$, and $E:=f^{-1}(D)_{red}$. This resolution defines a log-resolution of singularities $g: \widetilde{D}\to D$ by restriction, that is an isomorphism outside of $x$. We use the spectral sequence (\ref{ss1}) for the constant map from $X$ to a point. In this case, it says \begin{equation}\label{ss2} E_1^{-n-l,q} = \HH^{q-n-l}(X, \DR(\gr^W_{n+l}\OX(*D))) \Rightarrow H^{q-l}(U,\CC),\end{equation} noting that $\DR(\OX(*D)) \cong \textbf{R}j_*\CC_U[n]$, where $j:U = X\smallsetminus D\into X$. We also have the isomorphism $$E_2^{-n-l,q}\cong \Gr^W_{q}H^{q-l}(U).$$ Consider the maps $$E_1^{-n-l-1,n+l}\to E_1^{-n-l,n+l}\to E_1^{-n-l+1,n+l}, $$ corresponding to $$\HH^{-1}(X, \DR(\gr^W_{n+l+1}\OX(*D))) \to \HH^{0}(X, \DR(\gr^W_{n+l}\OX(*D))) \to \HH^{1}(X, \DR(\gr^W_{n+l-1}\OX(*D))).$$ Moreover, the degeneration of the Hodge-to-de-Rham spectral sequence says that \begin{equation}\label{hdrdeg}\gr^F_{-n+p}\HH^{i}(X, \DR(\gr^W_{n+l}\OX(*D))) \cong \HH^{i}(X, \gr^F_{-n+p}\DR(\gr^W_{n+l}\OX(*D)))\end{equation} (see for example \cite{hodgeideals}*{Example 4.2}).\\

Consider first the case $l\geq 3$. Noting that $\HH^{i}(X, \DR((i_{x})_+H_l)) = 0$ if $i\neq 0$ for $l\geq 2$, we obtain that $$E_2^{-n-l,n+l} \cong H_l.$$ Applying $\gr^F_{-n+p}$, using (\ref{hdrdeg}), and the $E_2$-degeneration of the spectral sequence, we obtain that $$ \gr^F_{-n+p}E_2^{-n-l,n+l} \cong \gr^F_{-n+p}H_l = \Gr_F^{n-p}H_l \cong H^{n-p,l+p}(H^n(U)) \cong H^{p, n-l-p}(H^n_c(U))^*,$$ where the last isomoprhism follows from Poincaré duality (see \cite{PS}*{Theorem 6.23}). Using the long exact sequence of the pair $(X,D)$, we obtain that $$H^{p,n-l-p}(H^n_c(U)) \cong H^{p,n-l-p}(H^{n-1}(D)),$$ as $H^{n-1}(X)$ and $H^n(X)$ have pure Hodge structures. Finally, as $g$ has $\{x\}$ as discriminant, we have a long exact sequence, $$H^{n-2}(\widetilde{D})\to H^{n-2}(G)\to H^{n-1}(D)\to H^{n-1}(\widetilde{D}).$$ As this is a sequence of mixed Hodge structures, we obtain $$H^{p,n-l-p}(H^{n-1}(D)) \cong H^{p,n-l-p}(H^{n-2}(G)).$$

Consider now $l=2$.  In this case, the maps $$E_1^{-n-3, n+2} \to E_1^{-n-2, n+2} \to E_1^{-n-1, n+2} \to E_1^{-n,n+2}$$ correspond to $$0 \to H_2 \stackrel{\widetilde{\beta}}{\to} H^n(D)(-1)\stackrel{\widetilde{\gamma}}{\to} H^{n+2}(X).$$ Indeed, the first two terms follow from the explanation above. The third term follows from the fact that $\DR(\gr^W_{n+1}\OX(*D)) \cong IC_D(-1)$, a Tate twist of the intersection complex of $D$ \cite{saito09}*{\textsection 2.2}. Furthermore, $IH^n(D) \cong H^n(D)$ \cite{goreskymacpherson80}*{\textsection 6.1}. The last term in the complex, follow as $\DR(\gr^W_{n}\OX(*D)) \cong \CC_X[n].$ From the short exact sequence $$\ker{\beta} \to \Gr^{n-p}_FH_2 \to \im{\beta},$$ where $\beta = \gr^F_{-n+p}\widetilde{\beta}$ and $\gamma = \gr^F_{-n+p}\widetilde{\gamma}$, we obtain that \begin{equation*}\begin{split}\dim(\Gr^{n-p}_FH_2) & = \dim{\ker{\beta}} + \dim{\im{\beta}} \\ &= h^{p, n-p-2}(H^n_c(U)) + h^{n-p-1,p+1}(H^n(D)) \\&- h^{p, n-p-2}(X) + h^{p, n-p-2}(H^{n-2}_c(U)) - h^{p, n-p-2}(H^{n-1}_c(U)).\end{split}\end{equation*} Indeed, this follows from the descriptions of $E_2^{n-2+s, n+2}$ for $s=0,1,2$ and Poincaré duality. More precisely, we have three short exact sequences $$ 0\to H^{p,n-p-2}(H^n_c(U))^*\to \Gr_F^{n-p}H_2 \to \im{\beta} \to 0,$$ $$0\to \ker{\gamma} \to \Gr_F^{n-p}H^{n+1}(D)\to \im{\gamma}\to 0,$$ $$0 \to \im{\gamma}\to H^{p,n-p-2}(X)^* \to H^{p,n-p-2}(H^{n-2}_c(U))^*\to 0,$$ and also that $\Gr_F^{n-p}E_2^{n-1,n+2}\cong H^{p,n-p-2}(H^{n-1}_c(U))^*$. Using the long exact sequence associated to the pair $(X,D)$ to relate these three sequences, we obtain $$ \dim(\Gr^{n-p}_FH_2) = h^{n-p-1, p+1}(H^n(D)) - h^{p, n-p-2}(H^{n-2}(D)) + h^{p, n-p-2}(H^{n-1}(D)).$$ Finally, using that the map $g$ has $\{x\}$ as discriminant, we obtain that $$\dim(\Gr^{n-p}_FH_2) = h^{p,n-p-2}(H^{n-2}(G)) - h^{n-p-1, p+1}(H^n(G)).$$
\end{proof}

\begin{remark}\label{rmksecondterm} In general, the term $h^{n-p-1, p+1}(H^n(G))$ might not be 0. Consider for instance $n=4$ and $p=1$. In this case, $h^{2,2}(H^4(G)) = k$ where $k$ is the number of irreducible components of $G$. Using similar computations as above, we also see that $$h^{p,n-p-2}(H^{n-2}(G)) - h^{n-p-1, p+1}(H^n(G)) = h^{n-p-1, p+1}(H^n(D)) - h^{p,n-p-2}(H^{n-2}(D)),$$ that is, the failure of Poincaré duality. Still, in the case $p=0$, the term $h^{n-p-1, p+1}(H^n(G))$ is always 0, as $G$ is $(n-2)$-dimensional (see \cite{olano22a}*{Theorem B}).
\end{remark}

\section{Vanishing Theorems}

\subsection{Ample divisors} Let $X$ be a smooth projective variety of dimension $n$, and $D$ an ample divisor. Let $U = X\smallsetminus D$. As $U$ is smooth and affine, $H^{i+n}(U) = 0$ for $i> 0$ (see for instance \cite{lazarsfeld2}*{Theorem 3.1.1}). In this setting we have the following result. 

\begin{lemma}\label{ampleisolated}There is a short exact sequence \begin{equation*}\begin{split} 0\to &  H^i(X, \DR(W_{n+l}\OX(*D)) \to H^i(X, \DR(\gr^W_{n+l}\OX(*D))) \to \\ \to & H^{i+1}(X,\DR(W_{n+l-1}\OX(*D)))\to 0.\end{split}\end{equation*}
\end{lemma}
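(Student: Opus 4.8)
The plan is to read the sequence off the long exact hypercohomology sequence of the short exact sequence of mixed Hodge modules
$$0 \to W_{n+l-1}\OX(*D) \to W_{n+l}\OX(*D) \to \gr^W_{n+l}\OX(*D) \to 0 .$$
Exactness in the middle of the claimed sequence is automatic, so it is enough to prove that for every $i\geq 1$ (the relevant range, since $H^{i+n}(U)=0$ there) the natural map
$$H^i(X,\DR(W_{n+l-1}\OX(*D))) \longrightarrow H^i(X,\DR(W_{n+l}\OX(*D)))$$
vanishes; applying this with $i$ and with $i+1$ then makes $H^i(X,\DR(W_{n+l}\OX(*D))) \to H^i(X,\DR(\gr^W_{n+l}\OX(*D)))$ injective and $H^i(X,\DR(\gr^W_{n+l}\OX(*D))) \to H^{i+1}(X,\DR(W_{n+l-1}\OX(*D)))$ surjective, which is the asserted short exact sequence.

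To prove that vanishing I would first show that $H^i(X,\DR(W_{n+l}\OX(*D)))$ is pure of weight $n+l+i$ for $i\geq 1$. Writing $a\colon X\to\pt$ for the (projective) structure morphism, recall that $H^\bullet(X,\DR(M))=H^\bullet a_+ M$ carries a mixed Hodge structure and that, by the $E_2$-degeneration of the weight spectral sequence $E_1^{p,q}=H^{p+q}a_+\gr^W_{-p}M$ whose $E_1$-terms are pure of weight $q$ (the mechanism behind (\ref{ss1})), a mixed Hodge module with weights in $[\alpha,\beta]$ has $H^j a_+$ of weights in $[\alpha+j,\beta+j]$. Since $W_{n+l}\OX(*D)$ has weights $\leq n+l$, the weights of $H^i(X,\DR(W_{n+l}\OX(*D)))$ are $\leq n+l+i$. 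For the reverse bound, set $Q:=\OX(*D)/W_{n+l}\OX(*D)$, a mixed Hodge module of weights $\geq n+l+1$; the exact sequence $0\to W_{n+l}\OX(*D)\to\OX(*D)\to Q\to 0$, together with $\DR(\OX(*D))\cong\derR j_*\CC_U[n]$ and $H^{i+n}(U)=0$ for $i\geq 1$ (affineness of $U$, as $D$ is ample), shows that the connecting map makes $H^i(X,\DR(W_{n+l}\OX(*D)))$ a quotient of $H^{i-1}(X,\DR(Q))$, which has weights $\geq n+l+i$. Hence $H^i(X,\DR(W_{n+l}\OX(*D)))$ is pure of weight $n+l+i$, and since $H^i(X,\DR(W_{n+l-1}\OX(*D)))$ has weights $\leq n+l-1+i$, strictness of morphisms of mixed Hodge structures forces the map between them to be zero.

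The substantive ingredient is the control of weights of $H^\bullet a_+$ applied to $W_{n+l}\OX(*D)$ and to $\OX(*D)/W_{n+l}\OX(*D)$ — that is, to the weight sub- and quotient modules of $\OX(*D)$ rather than to $\OX(*D)$ itself — which is the standard consequence of Saito's formalism (purity of proper pushforwards of pure Hodge modules together with $E_2$-degeneration of the weight spectral sequence). Everything else is bookkeeping with the long exact sequence, so I expect no serious obstacle beyond keeping the weight indices straight.
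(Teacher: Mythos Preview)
Your argument is correct. The approach is essentially the same purity argument that underlies the paper's proof, but packaged more cleanly. The paper extracts three facts (a), (b), (c) about the graded weight pieces of $H^i(X,\DR(W_{n+k}\OX(*D)))$ from the $E_2$-degeneration of the weight spectral sequence (citing \cite{olano22a}*{Proof of Proposition 12.1}), observes that for $i\geq 1$ these combine to give $H^i(X,\DR(W_{n+k}\OX(*D)))\cong\im\alpha_{k+1}$ (purity in disguise), and then reads off the short exact sequence from the $E_1$-row $E_1^{-n-l-1,n+l+i}\to E_1^{-n-l,n+l+i}\to E_1^{-n-l+1,n+l+i}$ together with $E_2^{-n-l,n+l+i}=\Gr^W_{n+l+i}H^{n+i}(U)=0$. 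You instead invoke the weight bounds on $H^\bullet a_+$ as a black box, sandwich $H^i(X,\DR(W_{n+l}\OX(*D)))$ between a quotient of $H^{i-1}a_+Q$ (weights $\geq n+l+i$) and something of weights $\leq n+l+i$, and then use strictness directly on the long exact sequence of $0\to W_{n+l-1}\to W_{n+l}\to\gr^W_{n+l}\to 0$. Your route avoids unpacking the spectral sequence and the cross-reference to the earlier paper; the paper's route has the advantage that the intermediate statements (a)--(c) describe all the weight-graded pieces, not just the top one, which is sometimes useful elsewhere. Either way the substance is the same: purity of $H^i(X,\DR(W_{n+l}\OX(*D)))$ for $i\geq 1$ forces the connecting maps in the long exact sequence to behave as claimed.
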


\begin{proof} In \cite{olano22a}*{Proof of Proposition 12.1}, using the spectral sequences $$E_1^{-n-l, q} = H^{q-n-l}(X, \DR(\gr^W_{n+l}\OX(*D))) \Rightarrow H^{q-l}(U, \CC)$$ and $$ E_1^{'-n-l, q} = H^{q-n-l}(X, \DR(\gr^W_{n+l}W_{n+k}\OX(*D))) \Rightarrow H^{q-n-l}(X, \DR(W_{n+k}\OX(*D)))$$ and noting that $$E_{2}^{-n-l,q} \cong \Gr^W_qH^{q-l}(U, \CC)$$ $$E_{2}^{'-n-l,q} \cong \gr^W_qH^{q-n-l}(X, \DR(W_{n+k}\OX(*D)))$$ we obtained:
\begin{aenumerate}\item\label{a} For $s\geq 1$, $$\gr^W_{n+k+i-s}H^i(X, \DR(W_{n+k}\OX(*D))) \cong \Gr^W_{n+k+i-s}H^{i+n}(U, \CC).$$
\item\label{b} For $s\geq 1$, $$ \gr^W_{n+k+i+s}H^i(X, \DR(W_{n+k}\OX(*D))) = 0.$$
\item\label{c} Let $$\alpha_{k+1}: H^{i-1}(X, \DR(\gr^W_{n+k+1}\OX(*D))) \to H^i(X, \DR(\gr^W_{n+k}\OX(*D)))$$ corresponding to the map $E^{-n-k-1, i+n+k}_1 \to E_1^{-n-k, i+n+k}$. Then we have the following short exact sequence: $$0\to \im{\alpha_{k+1}} \to \gr^W_{i+n+k}H^i(X, \DR(W_{n+k}\OX(*D))) \to \Gr^W_{i+n+k}H^{i+n}(U, \CC) \to 0.$$
\end{aenumerate}
If $i\geq 1$, then $$\im{\alpha_{k+1}}\cong \gr^W_{i+n+k}H^i(X, \DR(W_{n+k}\OX(*D))) \cong H^i(X, \DR(W_{n+k}\OX(*D))).$$

Consider now the complex $$E_1^{-n-l-1, n+l+i} \stackrel{\alpha_{l+1}}{\to} E_1^{-n-l, n+l+i} \stackrel{\alpha_l}{\to} E_1^{-n-l+1, n+l+i}.$$ As $E_2^{-n-l,n+l+i} = 0$, using the analysis above, we obtain a short exact sequence $$0\to \im{\alpha_{l+1}} \to H^i(X, \DR(\gr^W_{n+l}\OX(*D))) \to \im{\alpha_l}\to 0,$$ and the result follows.
\end{proof}

When $p= 0$, the result above is enough to obtain that $$H^i(X, \omega_X(D) \otimes I_0^{W_l}(D)) = 0 $$ for $l\geq 2$ and $i\geq 1$. Indeed, as 0 is the lowest degree of the Hodge filtration on $\OX(*D)$, we have $$H^i(X, \omega_X(D) \otimes I_0^{W_l}(D)) \cong \gr_{-n}^FH^i(X, \DR(W_{n+l}\OX(*D))).$$ This is no longer the case when we consider $\gr^F_{-n+p}$ for $p\geq 1$ instead. Nonetheless, following the idea in \cite{hodgeideals}*{Proof of Theorem F}, we give conditions in \theoremref{vanishingample} to obtain an analogue vanishing theorem.

\begin{proof}[Proof of \theoremref{vanishingample}]
Since $I_{p-1}^{W_l}(D) = \OX$, we have the following short exact sequence $$0\to \omega_X(pD) \to \omega_X((p+1)D) \otimes I_p^{W_l}(D) \to \omega_X \otimes \gr^F_p(W_{n+l}\OX(*D)) \to 0.$$ Using the long exact sequence of cohomologies and Kodaira-vanishing, we note that it is enough to prove that $$H^i(X, \omega_X \otimes \gr^F_p(W_{n+l}\OX(*D))) = 0.$$

Consider now the complex $$C^{\bullet} := \gr^F_{-n+p}\DR(W_{n+l}\OX(*D)).$$ The complex $C^{\bullet}$ can be identified with the complex \begin{equation*}\begin{split}\left[\Omega_X^{n-p}\otimes \OX(D) \to \Omega_X^{n-p+1}\otimes \shO_D(2D) \to \cdots \to \Omega_X^{n-1}\otimes \shO_D(pD)\to \omega_X \otimes \gr^F_p(W_{n+l}\OX(*D))\right]
\end{split}\end{equation*}
concentrated in degrees $-p$ to $0$, since $F_0W_{n+l}\OX(*D) = \OX(D)$ and $\gr^F_kW_{n+l}\OX(*D) \cong \shO_D((k+1)D)$ for $k\leq p-1$ (see \textsection{\ref{mhmpreliminaries}} for the definition of $\gr^F_p\DR )$.\\

Suppose now that $D$ has at most isolated singularities. By \lemmaref{ampleisolated}, we obtain that $$\HH^i(X, \DR(W_{n+l}\OX(*D)))= 0$$ for $i\geq 1$ and $l\geq 2$. In particular, this means that $$\HH^i(X, C^{\bullet}) =0 $$ for the same indices, by the Hodge-to-de-Rham degeneration. Next, we use the exact sequence $$ E_1^{p,q} = H^q(X, C^p) \Rightarrow \HH^{p+q}(X, C^{\bullet}).$$ Note that $$E_1^{0,q} = H^q(X, \omega_X \otimes \gr^F_p(W_{n+l}\OX(*D))).$$
Since $$E_1^{-1, q} = H^q(X, \Omega_X^{n-1}\otimes \shO_D(pD)),$$ then $E_1^{-1, q} = 0$ if $q\geq 2$ by Nakano vanishing. Moreover, $E_1^{-1,1} = 0$ by our hypothesis.\\

We continue with a similar analysis in the higher pages of the spectral sequence. More precisely, we show that the hypothesis implies that $E_r^{-r, q+r-1} = 0$ for all $r\geq 2$. Note that this is enough to complete the proof. Indeed, if this is the case, we obtain that $$E_{\infty}^{0,q} = H^q(X, \omega_X \otimes \gr^F_p(W_{n+l}\OX(*D))) = 0$$ for $q\geq 1$, where the last equality follows from the established equality with $C^{\bullet}$.\\

To complete the proof, note that $$E_1^{-r, q+r-1} = 0 $$ for $r\geq p$. Indeed, this is clear for the strict inequality by the degrees on which $C^{\bullet}$ is concentrated, and $$E_1^{-p, q+p-1} = H^{q+p-1}(X, \Omega_X^{n-p}\otimes \OX(D)).$$ If $q\geq 2$, then this spaces vanishes by Nakano vanishing, and if $q=1$, it vanishes by our assumption. Finally, for $r\leq p-1$, we have $$E_1^{-r, q+r-1} = H^{q+r-1}(X, \Omega_X^{n-r}\otimes \shO_D((p+1 -r)D)).$$ This space fits the a long exact sequence \begin{equation*}\begin{split}  \to H^{q+r-1}(X, \Omega_X^{n-r}((p+1 -r)D)) \to E_1^{-r, q+r-1} \to H^{q+r}(X, \Omega_X^{n-r}((p -r)D)). \end{split}\end{equation*}
If $q\geq 2$, then the two other terms vanish by Nakano vanishing, and if $q = 1$, they vanish by the assumption.  
\end{proof}

\begin{remark}\label{rmkvanishingample} This result does not hold in general for $l = 1$ (see \cite{olano22a}*{Remark 9}).
\end{remark}

\subsection{Kodaira-type vanishing} Using a similar idea to the one in the proof of \theoremref{vanishingample}, we obtain a vanishing theorem for weighted Hodge ideals. This is the analogue result to \cite{hodgeideals}*{Theorem F}.

\begin{proposition}\label{vanishingkodaira} Let $X$ be a smooth projective variety of dimension $n$, and $D$ a reduced effective divisor. Let $L$ be a line bundle such that $L(kD)$ is ample for $0\leq k\leq p$, and assume $I_{p-1}^{W_1}(D)$ is trivial. Then
\begin{enumerate}\item For $l\geq 1$ and $i\geq 2$,  $$H^i(X, \omega_X((p+1)D) \otimes L \otimes I_p^{W_l}(D))= 0.$$ 
\item If $H^j(X, \Omega_X^{n-j} \otimes L((p-j+1)D)) = 0$ for all $1\leq j\leq p$, then $$H^1(X, \omega_X((p+1)D) \otimes L \otimes I_p^{W_l}(D))= 0$$ for $l\geq 1$.
\end{enumerate}
\end{proposition}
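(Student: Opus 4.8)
The plan is to adapt the proof of \theoremref{vanishingample}, making two substitutions: ampleness of $D$ is replaced by the ampleness of the twists $L(kD)$ for $0\le k\le p$, and the vanishing $\HH^i(X,\DR(W_{n+l}\OX(*D)))=0$ for $i\ge 1$ — obtained there from the affineness of $U$ via \lemmaref{ampleisolated}, which also forced the isolated-singularities hypothesis — is replaced by an appeal to Saito's vanishing theorem \cite{saito90}*{Proposition 2.33}. Since the latter applies to an arbitrary mixed Hodge module, no condition on the singularities of $D$ is needed here, and the case $l=1$ is allowed.

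First I would run the same reduction. Since $I_{p-1}^{W_1}(D)$ is trivial, the chain (\ref{chain}) gives $I_{p-1}^{W_l}(D)=\OX$ for every $l\ge 1$, and hence, exactly as in the proof of \theoremref{vanishingample}, $F_{p-1}W_{n+l}\OX(*D)=\OX(pD)$, $F_0W_{n+l}\OX(*D)=\OX(D)$ and $\gr^F_kW_{n+l}\OX(*D)\cong\shO_D((k+1)D)$ for $1\le k\le p-1$. Combining the short exact sequence
$$0\to\omega_X(pD)\otimes L\to\omega_X((p+1)D)\otimes L\otimes I_p^{W_l}(D)\to\omega_X\otimes L\otimes\gr^F_p W_{n+l}\OX(*D)\to 0$$
with Kodaira vanishing for the ample bundle $L(pD)$, the statement is reduced to proving $H^i(X,\omega_X\otimes L\otimes\gr^F_p W_{n+l}\OX(*D))=0$ for $i\ge 2$ in (1), resp.\ $i\ge 1$ in (2). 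Set $C^{\bullet}:=\gr^F_{-n+p}\DR(W_{n+l}\OX(*D))\otimes L$, which by the above is the complex
$$\bigl[\,\Omega_X^{n-p}\otimes L(D)\to\Omega_X^{n-p+1}\otimes L\otimes\shO_D(2D)\to\cdots\to\Omega_X^{n-1}\otimes L\otimes\shO_D(pD)\to\omega_X\otimes L\otimes\gr^F_p W_{n+l}\OX(*D)\,\bigr]$$
placed in degrees $-p,\dots,0$; its term in degree $0$ is the sheaf to be controlled.

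The crucial step is the vanishing $\HH^i(X,C^{\bullet})=0$ for $i>0$. Here I would invoke Saito's vanishing: the $\Dmod_X$-module $W_{n+l}\OX(*D)$ underlies the mixed Hodge module $W_{n+l}(j_*\QQ^H_U[n])$, and, the twists $L(kD)$ being ample for $0\le k\le p$, \cite{saito90}*{Proposition 2.33} — applied as in the proof of \cite{hodgeideals}*{Theorem F} — gives the vanishing in positive degrees of the hypercohomology of its twisted graded de Rham complex. I expect this to be the step demanding the most care: one must track precisely how Saito's theorem combines with the $D$-twists appearing in the low pieces of the Hodge filtration, and verify that replacing $\OX(*D)$ by its weight truncation $W_{n+l}\OX(*D)$ does not disturb the argument — it does not, because once $I_{p-1}^{W_1}(D)$ is trivial the pieces $F_k$ of the two $\Dmod_X$-modules agree for $k\le p-1$, and only the top piece $\gr^F_p$, which is precisely the sheaf we are after, differs.

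Granting this, the remainder is the spectral-sequence bookkeeping, carried out verbatim as at the end of the proof of \theoremref{vanishingample}. In $E_1^{a,q}=H^q(X,C^{a})\Rightarrow\HH^{a+q}(X,C^{\bullet})$ one has $C^{a}=0$ for $a>0$, so $E_{\infty}^{0,q}=H^q(X,\omega_X\otimes L\otimes\gr^F_p W_{n+l}\OX(*D))$ receives no outgoing differential and is a subquotient of $\HH^q(X,C^{\bullet})$, which vanishes by Saito's theorem for $q\ge 1$. It therefore suffices to kill every incoming differential, that is, to show $E_1^{-r,q+r-1}=0$ for $1\le r\le p$ and the relevant $q$. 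For $r=p$ this group is $H^{q+p-1}(X,\Omega_X^{n-p}\otimes L(D))$, which vanishes by Nakano vanishing when $q\ge 2$ and by the hypothesis of (2) (case $j=p$) when $q=1$. For $1\le r\le p-1$, tensoring $0\to\OX((p-r)D)\to\OX((p-r+1)D)\to\shO_D((p-r+1)D)\to 0$ by $\Omega_X^{n-r}\otimes L$ squeezes $E_1^{-r,q+r-1}$ between $H^{q+r-1}(X,\Omega_X^{n-r}\otimes L((p-r+1)D))$ and $H^{q+r}(X,\Omega_X^{n-r}\otimes L((p-r)D))$; the second vanishes by Nakano vanishing since $L((p-r)D)$ is ample, and the first by Nakano vanishing when $q\ge 2$ and by the hypothesis of (2) (case $j=r$) when $q=1$. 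This yields $H^q(X,\omega_X\otimes L\otimes\gr^F_p W_{n+l}\OX(*D))=0$ in the required ranges, proving both parts.
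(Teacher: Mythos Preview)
Your proposal is correct and follows essentially the same approach as the paper's proof: reduce via the short exact sequence and Kodaira vanishing to the cohomology of $\omega_X\otimes L\otimes\gr^F_p W_{n+l}\OX(*D)$, invoke Saito's vanishing \cite{saito90}*{Proposition 2.33} for the twisted graded de Rham complex, and run the same spectral sequence as in \theoremref{vanishingample}. One minor remark: your caveat about ``tracking how Saito's theorem combines with the $D$-twists'' and about the weight truncation is unnecessary---Saito's result applies to any mixed Hodge module twisted by an ample line bundle, and since $L=L(0\cdot D)$ is ample by hypothesis, the vanishing $\HH^i(X, L\otimes\gr^F_{-n+p}\DR(W_{n+l}\OX(*D)))=0$ for $i>0$ is immediate, with no further justification needed.
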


\begin{proof} Since $I_{p-1}^{W_l}(D) = \OX$, we have the following short exact sequence $$0\to \omega_X\otimes L(pD) \to \omega_X\otimes L((p+1)D) \otimes I_p^{W_l}(D) \to \omega_X \otimes L \otimes \gr^F_p(W_{n+l}\OX(*D)) \to 0.$$
By Kodaira-vanishing, it is enough to prove $$H^i(X, \omega_X \otimes L \otimes \gr^F_p(W_{n+l}\OX(*D))) = 0.$$ We have that $$\HH^i(X, L \otimes \gr^F_{-n+p}\DR(W_{n+l}\OX(*D))=0 $$ for $i\geq 1$ and $l\geq 1$ as a consequence of a vanishing result by Saito \cite{saito90}*{Proposition 2.33}. To complete the proof, we use the same spectral sequence as in the proof of \theoremref{vanishingample}. 
\end{proof}

\subsection{Applications} In this section, we combine the local study and the vanishing results. To obtain applications, we use the vanishing theorems of the previous sections. A class varieties where the vanishing condition in \theoremref{vanishingample} and \propositionref{vanishingkodaira} is satisfied, is toric varieties. In this case, the Bott-Danilov-Steenbrink vanishing theorem says that if $A$ is an ample line bundle on the toric variety $X$, then $$H^i(X, \Omega_j\otimes A)= 0 $$ for $j\geq 0$ and $i\geq 1$ (see e.g. \cite{mustata02}*{Theorem 2.4}). For the applications, we discuss the case of $X=\PP^n$. We start with the proof of \corollaryref{indconditions}.

\begin{proof}[Proof of \corollaryref{indconditions}] Consider the exact sequence $$0\to \shO_{\PP^n}(k) \otimes I_p^{W_l}(D) \to \shO_{\PP^n}(k) \to \shO_{Z_{p,l}}(k)\to 0.$$ The result follows from passing to cohomology and applying \theoremref{vanishingample}.
\end{proof}

\subsubsection{Isolated $p$-log-canonical singularities} Suppose the pair $(X,D)$ is $p$-log-canonical, and has at most isolated singularities. If $p=0$, the pair is log-canonical and in this case, $I_0^{W_1}(D)$ is the maximal ideal at each isolated singularity that is not rational, by a result of Ishii (see \cite{olano22a}*{\textsection 5.3}). For simplicity, let $x\in D$ be the only singularity and $i:\{x\}\into X$ the inclusion, and suppose that it is log-canonical singularity and not rational. The result above means that if we denote $$i_*H_l = \DR(\gr_{n+l}^W\OX(*D))$$ for $l\geq 2$, there exists exactly one degree $l$ such that $\dim(\gr^F_{-n}H_l) = 1$, and the rest are 0. In this case, using \cite{olano22a}*{Theorem B}, we say that the singularity is of type $(0,n-l)$ \cite{ishii85}*{Definition 4.1}. There is a similar picture for the cases $p\geq 1$ we describe next.\\

Non-rational log-canonical singularities correspond to the case where the minimal exponent at the singularity is 1. We then consider singularities with minimal exponent $p+1$, in which case $I_p(D) = \OX$ and $I_p^{W_1}(D)$ is non-trivial by \corollaryref{whiminimalexp}. These singularities generalize the example of non-rational log-canonical singularities in the following sense.

\begin{proposition}\label{minexprk1} Suppose $D$ has at most one isolated singularity $x\in D$, and $\widetilde{\alpha_D} = p+1$. Then, $$I_p^{W_1}(D) = \mathfrak{m}_x,$$ the maximal ideal of $x$ in $X$.
\end{proposition}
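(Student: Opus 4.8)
The plan is to establish the inclusions $I_p^{W_1}(D)\subseteq\mathfrak m_x$ and $\mathfrak m_x\subseteq I_p^{W_1}(D)$ separately, working in a neighbourhood of $x$ where $D=(f=0)$.

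For the first inclusion, note that for every $l\ge 1$ the ideal $I_p^{W_l}(D)$ is trivial on $X\smallsetminus\{x\}$, since there $D$ is smooth, so $W_{n+1}\OX(*D)=\OX(*D)$ and $I_p(D)=\OX$. Thus $I_p^{W_1}(D)$ is cosupported at $x$, and since $\widetilde{\alpha_D}=p+1$ is not strictly bigger than $p+1$, \corollaryref{whiminimalexp} gives $I_p^{W_1}(D)\ne\OX$; hence $I_p^{W_1}(D)\subseteq\mathfrak m_x$ and $\dim_\CC\OX/I_p^{W_1}(D)\ge 1$. Because an ideal contained in $\mathfrak m_x$ of colength one necessarily equals $\mathfrak m_x$, the whole statement reduces to the bound $\dim_\CC\OX/I_p^{W_1}(D)\le 1$.

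To reduce this colength, I would first dispose of $p=0$: that is the log-canonical, non-rational case, and $I_0^{W_1}(D)=\mathfrak m_x$ is exactly Ishii's result (\cite{ishii85}, \cite{olano22a}). Assume $p\ge 1$. Then $\widetilde{\alpha_D}=p+1>p$, so \corollaryref{whiminimalexp} applied with $p-1$ gives $I_{p-1}^{W_1}(D)=\OX$, hence $I_{p-1}^{W_l}(D)=\OX$ for all $l\ge 1$ by the chain \eqref{chain}. Feeding this into the short exact sequence \eqref{differencesequence} in degree $p-1$ forces $F_{p-1}\gr^W_{n+l}\OX(*D)=0$, i.e.\ $F_{p-1}(i_x)_+H_l=0$, for every $l\ge 2$; by the local description \eqref{pointpushfwd} of the Hodge filtration on $(i_x)_+H_l$ this means $\Gr_F^{n-r}H_l=0$ whenever $l\ge 2$ and $0\le r\le p-1$. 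Consequently the sum in \eqref{equationdifference} collapses to its last term, $\dim(F_p(i_x)_+H_l)=\dim(\Gr_F^{n-p}H_l)$, and (using $I_p(D)=\OX$ to identify $\OX/I_p^{W_1}(D)$ with $F_p\OX(*D)/F_pW_{n+1}\OX(*D)$) the colength we must bound is $\sum_{l\ge 2}\dim(\Gr_F^{n-p}H_l)$.

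It remains to prove $\sum_{l\ge 2}\dim(\Gr_F^{n-p}H_l)=1$, and here the hypothesis that $p+1$ is the \emph{minimal} exponent of $D$ — not just a root of $\widetilde{b_D}$ — must be used. By \theoremref{thmdifference}, for a log-resolution $g\colon\widetilde D\to D$ of $(D,x)$ with exceptional divisor $G$ this sum equals
\[
\sum_{l\ge 2}h^{p,n-l-p}\bigl(H^{n-2}(G)\bigr)-h^{n-p-1,p+1}\bigl(H^n(G)\bigr)=\dim\Gr_F^{p}H^{n-2}(G)-h^{n-p-1,p+1}\bigl(H^n(G)\bigr),
\]
the first sum running over all Hodge pieces of $H^{n-2}(G)$ in Hodge degree $p$, since that cohomology group has weights $\le n-2$. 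The assertion is then that $\widetilde{\alpha_D}=p+1$ forces the right-hand side to be $1$; for $p=0$ this is Ishii's $\dim\Gr_F^{0}H^{n-2}(G)=1$ together with $H^n(G)=0$. Proving this identification is the crux of the argument. One clean route is through the $V$-filtration and \theoremref{whivfiltration2}: for $g\in\mathfrak m_x$ one tests $v=g\,\partial_t^p\delta$, which lies in $V^1 i_+\OX$ because $\widetilde{\alpha_f}\ge p+1$ gives $\partial_t^p\delta\in V^1 i_+\OX$, and whose image under the map of \theoremref{whivfiltration2} is $p!\,g$; since $g$ is $t$-independent, $(t\partial_t)v=g\,(t\partial_t)\partial_t^p\delta$, so one is reduced to showing $(t\partial_t)(g\,\partial_t^p\delta)\in V^{>1}i_+\OX$, i.e.\ that $g$ annihilates the class of $(t\partial_t)\partial_t^p\delta$ in $\gr^1_V i_+\OX=\psi_{f,1}\OX$. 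That class vanishes over the smooth locus $D\smallsetminus\{x\}$ (there the monodromy, hence the operator $t\partial_t$ on $\psi_{f,1}\OX$, is trivial), so it is a section supported at $x$; it is nonzero (otherwise $I_p^{W_1}(D)$ would be trivial), and the decisive and hardest step is to show it is annihilated by all of $\mathfrak m_x$ — equivalently that it sits in the socle of the skyscraper $\Dmod_X$-submodule it generates — which is precisely where the minimality of the exponent $p+1$ is indispensable.
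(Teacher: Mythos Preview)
Your argument has a genuine gap: you correctly isolate the decisive step — that $\mathfrak m_x$ annihilates the class of $(t\partial_t)\partial_t^p\delta$ in $\gr^1_Vi_+\OX$ (equivalently, that $g\partial_t^{p+1}\delta\in V^{>0}i_+\OX$ for every $g\in\mathfrak m_x$) — but you do not prove it. Knowing the class is supported at $x$ only tells you it is killed by some \emph{power} of $\mathfrak m_x$; getting annihilation by $\mathfrak m_x$ itself requires an extra input that is independent of the minimal-exponent hypothesis. The paper supplies precisely this input: for an isolated singularity, $\gr^F_{\bullet}\gr_V^{\alpha}i_+\OX$ is annihilated by $\mathfrak m_x$ for every $\alpha<1$ \cite{dimcasaito12}*{4.11.1}. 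Since $\widetilde{\alpha_f}=p+1$ gives $\partial_t^{p+1}\delta\in V^0\setminus V^{>0}$, its class in the relevant $\gr^F\gr_V^{0}$ is nonzero, and the Dimca--Saito result then forces $g\partial_t^{p+1}\delta$ into $F_{p}V^0+V^{>0}$; but $F_{p}i_+\OX\subseteq V^1\subseteq V^{>0}$ (again because $\widetilde{\alpha_f}\ge p+1$), so in fact $g\partial_t^{p+1}\delta\in V^{>0}$. That is the whole argument.

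Note also that your long detour through the Hodge structures $H_l$ and \theoremref{thmdifference} is unnecessary and does not actually close the gap: you abandon that route and return to the $V$-filtration without having used anything from it. The paper's proof bypasses all of this. It works directly with \theoremref{whivfiltration2}: from $\widetilde{\alpha_f}=p+1$ one already knows $\delta,\partial_t\delta,\dots,\partial_t^{p-1}\delta\in W_1V^1$ (this is in the proof of \corollaryref{whiminimalexp}), so the question is exactly whether $g\partial_t^p\delta\in W_1V^1$; the Dimca--Saito annihilation gives the ``if $g\in\mathfrak m_x$'' direction, and nontriviality of $I_p^{W_1}(D)$ gives the converse. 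Your remark that ``minimality of the exponent is indispensable'' is slightly off the mark here: minimality is used only to know the ideal is nontrivial, whereas the socle statement you flagged comes from the isolated-singularity structure of the $V$-filtration, not from the exponent.
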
 

\begin{proof} Suppose that $D$ is defined by $f\in\OX$. Recall from the proof of \corollaryref{whiminimalexp}, that as $\widetilde{\alpha_f} = p+1$, then $\delta, \partial_t\delta, \ldots, \partial_t^p\delta\in V^1B_f$. Moreover, we also know that $\delta, \partial_t\delta, \ldots, \partial_t^{p-1}\delta\in W_1V^1B_f$. It is then enough to show that $g\partial_t^p\delta\in W_1V^1B_f$ if and only if $g\in \mathfrak{m}_x$. As $D$ has an isolated singularity, we have that $$\gr^F_p\gr_V^{\alpha} B_f \text{ is annihilated by } \mathfrak{m}_x$$ for $\alpha <1$  \cite{dimcasaito12}*{4.11.1}.\\

We also know that $\partial_t^p\delta\in V^1B_f\smallsetminus W_1V^1B_f$, and this means that $\partial_t^{p+1}\delta \in V^0B_f\smallsetminus V^{>0}B_f$. In particular, the class of $\partial_t^{p+1}\delta$ in $\Gr^F_p\gr_V^{0}B_f$ is not zero. Using the result above, for any $g\in\mathfrak{m}_x$, the class of $g\partial_t^{p+1}\delta$ in $\Gr^F_p\gr_V^{0}B_f$ is zero. This means that $g\partial_t^{p+1}\delta\in V^{>0}B_f$, and equivalently, $g\partial_t^p\delta\in W_1V^1B_f$. Using the description of \theoremref{whivfiltration2}, we obtain that $g\in I_p^{W_1}(D)$ for any $g\in\mathfrak{m}_x$, and we know that the ideal is not trivial, hence we have an equality.
\end{proof}

In other words, if $D$ has one isolated singularity $x\in D$, and $\widetilde{\alpha_D} = p+1$, then $$\sum_{l\geq 2}{\dim(\Gr_F^{n-p}H_l)} = 1$$ by \theoremref{thmdifference}, that is, there is exactly one $l\geq 2$ such that $$\dim(\Gr_F^{n-p}H_l) = 1,$$ and the rest are 0. Moreover, by the same result, $\sum_{l\geq 2}{\dim(\Gr_F^{n-r}H_l)} = 1$, for $0\leq r\leq p-1$.

\begin{remark}\label{remarkfriedmanlaza} Friedman and Laza have studied related invariants of singularities in similar conditions in \cite{friedmanlaza22b}*{Theorem 6.11 and Corollary 6.14}.  
\end{remark}

\begin{definition}\label{(p,s-p)} Let $x\in D$ be an isolated singularity such that $\widetilde{\alpha_D}_x = p+1$, that is an isolated $p$-log-canonical that is not $p$-rational. Let $l$ be the degree such that $\dim(\Gr_F^{n-p}H_l) =1$. Then, we say that the singularity is of type $(p, n-l-p)$.
\end{definition}

\begin{remark} \begin{enumerate}[label=\roman*)] \item \definitionref{(p,s-p)} is analogous to the definition of isolated log-canonical singularities of type $(0,s)$ \cite{ishii85}*{Definition 4.1}, when $x\in D$ is an isolated singularity and $D$ is a hypersurface of a smooth variety.
\item Ishii defined these singularities more generally for normal isolated 1-Gorenstein log-canonical singularities. It is an open question how to generalize this definition for non-hypersurface singularities.
\item  The possible types are $(p,p), (p,p+1), \ldots (p,n-2-p)$. This is a consequence of the fact that the nilpotency order of the vanishing cohomology is bounded by Briançon-Skoda exponent \cite{scherk80}*{Main Theorem}. This nilpotency order gives a bound for the nilpotency order of $(\partial_t t)$ on $\gr_V^0B_f$, which in turn gives a bound for the order of $(t\partial_t)$ on $\gr_V^1B_f$. The Briançon-Skoda exponent is bounded by $n-2p -1$ (see for instance \cite{jksy22b}), which means that $n-l-p \geq p$.
\end{enumerate}\end{remark}

\begin{example}\label{examplessings} Suppose that $f\in\CC[x_1,\ldots ,x_n]$ is a polynomial with an isolated singularity at the origin, and a non-degenerate Newton boundary. Let $\Gamma_+(f) = \Gamma$ the Newton polyhedron of $f$, $\Gamma(f)$ the union of the compact faces of $\Gamma_+(f)$, and $\mathcal{F}$ the set of compact facets. For each $F\in\mathcal{F}$, there is a unique vector $B_F\in(\QQ_{\geq 0})^n$ such that $\langle A, B_F\rangle = 1$ for all $A\in F$. For every monomial $x^{\nu}$, we define $$\tilde{\rho}_F(x^{\nu}) = \langle \nu + \mathbf{1}, B_F\rangle ,$$ where $\mathbf{1} = (1,\ldots , 1)$, and for any $g\in\shO$, $g=\sum g_Ax^A$, $$\tilde{\rho}_F(g) = \min\{\tilde{\rho}_F(x^A)\ : \ g_A\neq 0\}.$$ Finally, we define $$\tilde{\rho}(g) = \min\{\tilde{\rho}_F(g) \ : \ F\in\mathcal{F}\}.$$ In this case, the minimal exponent is $\tilde{\rho}(1)$.\\

Suppose $\widetilde{\alpha}_f = p+1$, which implies that $\partial^p_t\delta \in V^1i_+\OX$. Using the description of the microlocal $V$-filtration (see \cite{saito94}*{Proposition 3.2}), we see that if $$r = \#\{F\in\mathcal{F} \; : \;  \tilde{\rho}_F(1) = \tilde{\rho}(1)\},$$ then $(t\partial_t)^{r+1}\partial_t^p\delta \in V^{>1}i_+\OX$, or equivalently, $$1\in I_p^{W_{r+1}}(D).$$ In general, $r+1$ is not the degree with $\Gr_F^{n-p}H_l\neq 0$.
\begin{enumerate}[label=\roman*), wide, labelwidth=!, labelindent=0pt]\item Weighted homogeneous singularities with $\widetilde{\alpha}_f=p+1$ are examples of singularities of type $(p,n-2-p)$ (see \remarkref{qhomog}). Isolated singularities with non-degenerate Newton boundary give examples for different degrees of $l$. For instance, $f = x^2 + y^2 + z^2 + u^2w^2 + u^4 + w^5\in\CC^5$ satisfies that $\widetilde{\alpha_f} = 2$, and $r=2$, using the notation above. We can also verify that $(t\partial_t)^{2}\partial_t\delta \notin V^{>1}i_+\OX$, since $w^5\partial_t^3\delta\in V^0\setminus V^{>0}$. Indeed, this follows from the fact that $w^5\notin J(f)$, where $J(f)$ is the Jacobian ideal, and \cite{jksy22a}*{Proposition 1.3}. Therefore, this singularity is of type $(1,1)$.\\
\item Let $\Delta_0$ be the compact face that contains $\frac{1}{p+1}\mathbf{1}$ in its relative interior, and let $s=\dim{\Delta_0}$. Assume also that the Newton polyhedron is simplicial. The number $r$ defined above satisfies that $s = n - r$. Let $l$ be the degree such that $\Gr_F^{n-p}H_l\neq 0$. Then $l\leq r+1 = n-s+1$, if $s>0$, and $l\leq n$ is $s=0$.\\

\item If $p=0$, the previous inequalities are equalities (without the simplicial assumption) by a result of Watanabe that says that the singularities are log-canonical of type $(0,s-1)$ if $s>0$, and $(0,0)$ if $s=0$, which is equivalent to the equalities \cite{watanabe86}*{Corollary 3.14}.
\end{enumerate} 
\end{example}

Using \propositionref{minexprk1} and the vanishing results, we obtain a bound on the number of these singularities in a hypersurface of $\PP^n$.

\begin{corollary}\label{countpoints} Let $D$ be a reduced hypersurface of $\PP^n$ of degree $d$ with at most isolated singularities. Assume that the pair $(\PP^n, D)$ is strictly $p$-log-canonical, that is, $\widetilde{\alpha_D} = p+1$. Let $Z$ be the union of the strictly $p$-log-canonical singular points of $D$ and $Z_2$ the union of those of type $(p,p), \ldots, (p, n-3-p)$. Then, $$\# Z_2 \leq \binom{(p+1)d - 1}{n},$$ and $$\# Z \leq \binom{(p+1)d }{n}.$$
\end{corollary}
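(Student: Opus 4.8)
The plan is to identify the zero-dimensional schemes defined by $I_p^{W_1}(D)$ and by $I_p^{W_2}(D)$ with the reduced point sets $Z$ and $Z_2$, and then read the two bounds off the surjectivity statement of \corollaryref{indconditions}. First note that $\widetilde{\alpha_D}=p+1$ implies $\widetilde{\alpha_D}\ge p+1$, hence $I_p(D)=\OX$ (so $D$ is $p$-log-canonical), and also $\widetilde{\alpha_D}>p$, so \corollaryref{whiminimalexp} applied with $p-1$ in place of $p$ gives $I_{p-1}^{W_1}(D)=\OX$; in particular the triviality hypothesis of \theoremref{vanishingample}, on which \corollaryref{indconditions} rests, is in force. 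Since $I_p^{W_n}(D)=I_p(D)=\OX$, the chain $I_p^{W_1}(D)\subseteq\cdots\subseteq I_p^{W_n}(D)=\OX$ shows that every $\OX/I_p^{W_l}(D)$ is a finite-length sheaf supported on the singular locus of $D$.

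For $l=1$ I would argue as follows: at a singular point $y$ with $\widetilde{\alpha_D}_y>p+1$, \corollaryref{whiminimalexp} gives $I_p^{W_1}(D)=\OX$ near $y$, while at a point $x\in Z$ \propositionref{minexprk1} gives $I_p^{W_1}(D)=\mathfrak{m}_x$ near $x$; hence the scheme $Z_{1,p}$ cut out by $I_p^{W_1}(D)$ is precisely the reduced set $Z$ and $h^0(\PP^n,\shO_{Z_{1,p}})=\#Z$. For $l=2$ I would filter $\OX/I_p^{W_2}(D)$ by the subsheaves $I_p^{W_j}(D)/I_p^{W_2}(D)$ and compute lengths at a point $x\in Z$ using the difference sequence (\ref{differencesequence}) and the formula (\ref{dimensiondifference}): \propositionref{minexprk1} together with \theoremref{thmdifference} forces $\dim(\Gr_F^{n-r}H_j)=0$ for $0\le r\le p-1$ and yields a unique $j_0\ge 2$ with $\dim(\Gr_F^{n-p}H_{j_0})=1$, so that $\mathrm{length}_x\bigl(\OX/I_p^{W_2}(D)\bigr)=\sum_{j\ge 3}\dim(\Gr_F^{n-p}H_j)$ is $1$ exactly when $j_0\ge 3$, i.e. when $x$ has type in $\{(p,p),\dots,(p,n-3-p)\}$, and $0$ when $j_0=2$ (type $(p,n-2-p)$). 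Because $I_p^{W_2}(D)\supseteq I_p^{W_1}(D)$, this pins $I_p^{W_2}(D)$ to $\mathfrak{m}_x$ at the points of $Z_2$ and to $\OX$ at all other singular points, so $Z_{2,p}=Z_2$ as reduced schemes and $h^0(\PP^n,\shO_{Z_{2,p}})=\#Z_2$.

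Finally I would invoke \corollaryref{indconditions}: for $l=1$ it gives $H^0(\PP^n,\shO_{\PP^n}(k))\onto H^0(\PP^n,\shO_{Z_{1,p}})$ for $k=(p+1)d-n$, and for $l=2$ the corresponding surjection onto $H^0(\PP^n,\shO_{Z_{2,p}})$ already for $k=(p+1)d-n-1$. Comparing dimensions, with $h^0(\PP^n,\shO_{\PP^n}(k))=\binom{n+k}{n}$, yields $\#Z\le\binom{(p+1)d}{n}$ and $\#Z_2\le\binom{(p+1)d-1}{n}$. The main obstacle is the length bookkeeping for $l=2$: one must verify that the colength of $I_p^{W_2}(D)$ at each strictly $p$-log-canonical point records exactly the single Hodge number $\dim(\Gr_F^{n-p}H_{j_0})$ and equals $1$ precisely for the types assembled into $Z_2$. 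This is where the hypothesis $\widetilde{\alpha_D}=p+1$ is essential — through \propositionref{minexprk1} it forces $I_p^{W_1}(D)=\mathfrak{m}_x$, hence the vanishing of all lower Hodge pieces $\Gr_F^{n-r}H_j$ with $r<p$; without this, $\OX/I_p^{W_2}(D)$ could carry additional length and the count would degrade.
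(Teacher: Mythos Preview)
Your argument is correct and follows the same route as the paper: identify the schemes $Z_{1,p}$ and $Z_{2,p}$ cut out by $I_p^{W_1}(D)$ and $I_p^{W_2}(D)$ with the reduced point sets $Z$ and $Z_2$ via \propositionref{minexprk1}, and then apply \corollaryref{indconditions}. The paper's own proof is a two-line sketch that does not spell out the $l=2$ case; your length computation using (\ref{differencesequence}) and (\ref{dimensiondifference}) to show $I_p^{W_2}(D)_x\in\{\mathfrak{m}_x,\OX\}$ according to the type of $x$ makes that step explicit and is exactly what is implicitly meant.
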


\begin{proof} By \propositionref{minexprk1}, the scheme $Z$ is defined by the ideal $I_p^{W_l}(D)$. Therefore, the result follows from \corollaryref{indconditions}.

\end{proof}

\section{Restriction Theorem}

Let $(\Mmod, F)$ be a filtered right $\Dmod$-module underlying a mixed Hodge module $M$ on $X$. Let $H\subseteq X$ be a smooth hypersurface and $i:H\into X$ the inclusion. In this section, we change the notation of the $V$-filtration by $V_k = V^{-k}$, which is the notation used in \cite{mustatapopa18}. There exists a canonical morphism \begin{equation} \gr_0^V\Mmod \stackrel{\sigma}{\to} \gr_{-1}^V\Mmod\otimes_{\OX}\OX(H)\end{equation} satisfying $$\cohH^0i^!\Mmod\cong\ker(\sigma) \ \ \text{ and } \ \ \cohH^1i^!\Mmod\cong\coker(\sigma)$$
 with the filtrations induced by the filtrations on $\Mmod$ (see \cite{mustatapopa18}*{\textsection 2}). Moreover, on an open set $U\subseteq X$ where $H$ is given by a local equation $t$, this map corresponds to $$\Var = \cdot t: \gr^V_0\Mmod \to \gr^V_{-1}\Mmod$$ between the vanishing and nearby cycles along $H$.\\

In the proof of \cite{mustatapopa18}*{Theorem A} the authors defined for all $k$ a morphism \begin{equation}\label{maprestriction} F_k\cohH^1i^!\Mmod \to F_k\Mmod\otimes_{\OX}\shO_H(H).\end{equation} First, we define a morphism $$\eta: F_k\gr^V_{-1}\Mmod = \frac{F_kV_{-1}\Mmod}{F_kV_{<-1}\Mmod} \to F_k\Mmod\otimes_{\OX}\shO_H $$ such that for $u\in F_kV_{-1}\Mmod$, $\eta(u)$ is the class of $u$ in $F_k\Mmod\otimes_{\OX}\shO_H$. This map is well defined, as on an open set $U$ where $H$ is defined by an equation $t$, the $V$-filtration satisfies $$(F_kV_{\alpha}\Mmod)\cdot t = F_kV_{\alpha-1}\Mmod \ \text{ for } \alpha<0,$$ and $F_k\Mmod\cdot t$ maps to 0 in $F_k\Mmod\otimes_{\OX}\shO_H$. The map $\eta$ induces a map on $F_k\cohH^1i^!\Mmod$. Indeed, since locally $\sigma$ is right multiplication by $t$, the image of $\sigma$ is mapped to 0 by $\eta\otimes\OX(H)$.\\

\begin{proof}[Proof of \theoremref{thmrestriction}] Let $\Mmod = W_{n+l}\omega_X(*D)$. For every $k$ we have the canonical morphism (\ref{maprestriction}): $$F_k\cohH^1i^!\Mmod \to F_k\Mmod\otimes_{\OX}\shO_H(H).$$ Note that the sheaf $$F_{k-n}\Mmod\otimes_{\OX}\shO_H(H) = I_k^{W_l}(D)\otimes\omega_X((k+1)D) \otimes\shO_H(H) \cong I_k^{W_l}(D)\otimes\omega_H((k+1)D_H).$$\\

Consider the short exact sequence $$0\to \Mmod\to \omega_X(*D)\to\mathcal{C} \to 0.$$ Applying the functor $i^!$ and taking cohomology we obtain an exact sequence $$0\to \cohH^0i^1\mathcal{C}\to\cohH^1i^!\Mmod\to\cohH^1i^!\omega_X(*D)\to\cohH^1i^!\mathcal{C}\to 0,$$ as $\cohH^0i^!\omega_X(*D)=0$. As $\gr^W_i\mathcal{C} = 0$ for $i<n+l+1$, $$\gr^W_i\cohH^0i^!\mathcal{C}= 0 \ \ \text{ for } i<n+l+1,$$ and $$\gr^W_i\cohH^1i^!\mathcal{C}= 0 \ \ \text{ for } i<n+l+2$$ by \cite{saito90}*{Proposition 2.26}. Therefore, we obtain a short exact sequence $$ 0\to W_{n+l+1}\cohH^0i^1\mathcal{C}\to W_{n+l+1}\cohH^1i^!\Mmod\to W_{n+l+1}\cohH^1i^!\omega_X(*D)\to 0.$$ Note that as $$\Ext^1(W_{n+l+1}\cohH^1i^!\omega_X(*D), W_{n+l+1}\cohH^0i^1\mathcal{C}) =0$$ (see \cite{schnellsurvey}*{\textsection 23}), there is a split map \begin{equation}\label{splitmap} W_{n+l+1}\cohH^1i^!\omega_X(*D) \to W_{n+l+1}\cohH^1i^!\Mmod.\end{equation} The source of this maps admits the following interpretation: $$W_{n+l+1}\cohH^1i^!\omega_X(*D)\cong W_{n-1+l}\omega_H(*D_H).$$ Indeed, $$\cohH^1i^!\omega_X(*D) \cong \omega_H(*D_H)(-1)$$ \cite{mustatapopa18}*{Proof of Theorem A}.\\

Taking the corresponding piece of the Hodge filtration in (\ref{splitmap}) and composing it with (\ref{maprestriction}), we obtain a morphism  $$F_kW_{n+l+1}\cohH^1i^!\omega_X(*D) \to F_k\Mmod\otimes_{\OX}\shO_H(H).$$ Using the morphism above and switching $k$ to $k-n$, we obtain a map $$ F_{k-n+1}W_{n-1+l}\omega_H(*D_H) = I_k^{W_l}(D_H)\otimes\omega_H((k+1)D_H) \to I_k^{W_l}(D)\otimes\omega_H((k+1)D_H),$$ and hence $$ I_k^{W_l}(D_H) \to I_k^{W_l}(D)\otimes\shO_H.$$ Composing this map with $I_k^{W_l}(D)\otimes\shO_H \to I_k^{W_l}(D)\cdot\shO_H,$ we obtain a morphism \begin{equation}\label{equationinclusionideals} I_k^{W_l}(D_H) \to I_k^{W_l}(D)\cdot\shO_H.\end{equation} By construction, this map is compatible with restriction to open sets. Let $V=H\setminus D_H$ be the complement. When restricted to $V$, this map is the identity on $\shO_V$, and therefore it is an inclusion.\\

For the last statement, we note that a general $H$ is in particular non-characteristic with respect to $\omega_X(*D)$. By the description of the $V$-filtration in this case \cite{saito88}*{Lemma 3.5.6}, the map $\sigma$ is the zero map, and therefore (\ref{maprestriction}) is a surjection. Moreover, in this case $$\cohH^1i^!\Mmod = \cohH^1i^!W_{n+l}\omega_X(*D) \cong W_{n+l+1}\cohH^1i^!\omega_X(*D)$$ where the first equality is the definition of $\Mmod$ and the isomorphism is a result of Saito \cite{saito90}*{Lemma 2.25}. Hence, in this case (\ref{equationinclusionideals}) is an isomorphism.
\end{proof}

\begin{remark} A similar result can be obtained when $H$ is an intersection of several general hyperplane sections. For more details, see \cite{olano22a}*{Remark 12}.
\end{remark}

\bibliography{../../bib}

\end{document}